\newtheorem{definition}{Definition}[section]
\newtheorem{remark}{Remark}[section]
\newtheorem{theorem}[definition]{Theorem}
\newtheorem{lemma}[definition]{Lemma}
\newtheorem{corollary}[definition]{Corollary}
\DeclarePairedDelimiter\floor{\lfloor}{\rfloor}
\renewcommand{\@biblabel}[1]{[#1]\hfill}
\begin{document}
\setcounter{page}{1}

\begin{center}
{\LARGE \bf Shortest polygonal chains covering each planar square grid\\[4mm] } 
\vspace{10mm}

{\Large \bf Marco Rip\`a}
\vspace{3mm}

World Intelligence Network\\ 
Rome, Italy\\
\end{center}
\vspace{12mm}


\noindent {\bf Abstract:} Given any $n \in \mathbb{Z}^{+}$, we constructively prove the existence of covering paths and circuits in the plane which are characterized by the same link length of the minimum-link covering trails for the two-dimensional grid $G_n^2 := \{0,1, \ldots, n-1\} \times \{0, 1, \ldots, n-1\}$. Furthermore, we introduce a general algorithm that returns a covering cycle of analogous link length for any even value of $n$. Finally, we provide the tight upper bound $n^2 - 3 + 5 \cdot \sqrt{2}$ units for the minimum total distance travelled to visit all the nodes of $G_n^2$ with a minimum-link trail (i.e., a trail with $2 \cdot n - 2$ edges if $n$ is above two).

\noindent {\bf Keywords:} Path covering, Optimization, Link distance, Minimum length, Combinatorics.

\noindent {\bf 2020 Mathematics Subject Classification:} 05C38 (Primary); 05C12, 91A43 (Secondary).
\vspace{8mm}


\section{Introduction} \label{sec:Intr}

The classical \textit{nine dots puzzle} by Sam Loyd (see Reference \cite{Loyd:6}, p. 301) is a famous thinking outside-the-box challenge that asks to connect nine points arranged in a regular square grid with a polygonal chain having no more than four edges. This puzzle did not introduce the constraint to visit each point only once, nor to come back to the starting vertex with the last edge of the path.

The planar generalization of the mentioned problem to larger square grids, asking to join all the points of the set $\{\{0,1,\ldots, n-1\} \times \{0,1,\ldots, n-1\}\} \subset \mathbb{R}^2$ with a polygonal chain consisting of only $2 \cdot n-2$ segments \cite{Levitin:5, Pegg:7}, became quite popular on the Web at the beginning of the XXI century as a challenging brain teaser (for applications in cognitive psychology, see also \cite{Chein:2, Kershaw:3}).

In the next section, assuming $n > 4$, we constructively prove that all the nodes of the described $n \times n$ lattice can be joined by a standard path consisting of only $2 \cdot n - 2$ line segments and, furthermore, this pattern originates a valid closed path (i.e., a covering cycle for the given $n \times n$ lattice of points) of $2 \cdot n - 2$ segments for each even value of $n$. In order to compactly state these results, let us give a few definitions first, together with a couple of remarks.

\begin{definition} \label{def1.2}
Let $n \in \mathbb{Z}^{+}$. We define $G_n^2 := \{0,1,\ldots, n-1\} \times \{0,1,\ldots, n-1\}$ so that the grid $G_n^2$ is a set of $n^2$ points in the Euclidean space $\mathbb{R}^{2}$.
\end{definition}

\begin{definition} \label{def1.3}
Let $P_n$ indicate a covering path for $G_n^2$, which is a directed polygonal chain that visits each node of $G_n^2$ exactly once, and let $T_n$ denote a covering trail for $G_n^2$, which is a directed polygonal chain that joins all the nodes of the mentioned planar grid by visiting all of them at least once (e.g., the polygonal chain $(0,-1)$-$(0,3)$-$(3,0)$-$(0,0)$-$(2,2)$ is a covering trail for $G_3^2$, but it is not also a covering path since the vertex $(0,0)$ is visited twice); in both cases, edges cannot be repeated and each couple of consecutive edges cannot be collinear, while a covering path can repeat Steiner points since (by definition) they do not belong to the given grid (e.g., $(0,3)$-$(0,0)$-$(3,0)$-$(0,3)$-$(3,3)$-$(\frac{1}{7},\frac{1}{7})$ is a covering path for $G_3^2$ with Steiner points located at $(0,3)$, $(3,0)$, $(3,3)$, and $(\frac{1}{7},\frac{1}{7})$). Furthermore, we indicate as $\{T_n \}$ and $\{P_n \}$ the sets of all the covering trails and covering paths for $G_n^2$, respectively.
\end{definition}

\begin{definition} \label{def1.4}
Let $F_n$ indicate a covering circuit for $G_n^2$, which is a (non-empty) closed directed trail that visits each node of $G_n^2$ and whose starting point is equal to the endpoint. Then, let $C_n$ indicate a covering cycle for $G_n^2$, which is a (non-empty and possibly self-crossing) closed path that is also a covering circuit for $G_n^2$. Consequently, $C_n$ is such that only the starting node of the grid can be touched twice, as long as it coincides with the beginning and ending vertex of the path itself. Furthermore, we indicate as $\{F_n \}$ and $\{C_n \}$ the sets of all the covering circuits and covering cycles for $G_n^2$, respectively.
\end{definition}

\begin{definition} \label{def1.5}
The link length, $h(T_n)$, of a covering trail, $T_n$, corresponds to the number of its edges, while the length of each edge corresponds to the Euclidean distance between its two endpoints. Lastly, let us define the total distance travelled, $l(T_n )$, as the sum of the lengths of each edge belonging to the considered covering trail.
\end{definition}

\begin{remark} \label{rem1.1}
Since for the given $G_n^2$, by Definitions \ref{def1.3} and \ref{def1.4}, $T_n$ indicates a covering trail, $P_n$ a covering path, $F_n$ a covering circuit, and $C_n$ a covering cycle, their minimum link length must always satisfy the inequalities $\min(h\{T_n \}) \leq \min(h\{P_n \})$, $\min(h\{F_n \}) \leq \min(h\{C_n \})$, $\min(h\{T_n \}) \leq \min(h\{F_n \})$, and $\min(h\{P_n \}) \leq \min(h\{C_n \})$. In particular, $\min(h\{T_n \}) = 2 \cdot (n-1)$ is true if and only if $n$ is (strictly) greater than two.
\end{remark}

\begin{remark} \label{rem1.2}
Given any $n \in \mathbb{Z}^{+}$, the sets $\{T_n \}$, $\{P_n \}$, $\{F_n \}$, and $\{C_n \}$ cannot be characterized by a cardinality smaller than $2^{\aleph_0 }$, and so are the subsets consisting of all the covering trails/paths/ \linebreak circuits/cycles of minimum link length. Thus, to avoid ambiguity, we will describe a specific element of one of the above-mentioned subsets providing the sequence of its vertices (e.g., \linebreak $(0,0)$-$(0,2)$-$(2,0)$-$(0,0)$ and $\left(-\frac{1}{\sqrt{3}},0 \right)$-$\left(1+\frac{1}{\sqrt{3}},0 \right)$-$\left(\frac{1}{2},\frac{\sqrt{3}}{2}+1 \right)$-$\left(-\frac{1}{\sqrt{3}},0 \right)$ are both elements of the unique subset of $\{C_2 \}$ which contains all and only the minimal covering cycles for $G_n^2$, and it will be clear by context which covering cycle we are referring to as $``C_2"$).
\end{remark}

\begin{definition} \label{def1.6}
Let $l_l \{T_n \}$ and $l_u \{T_n \}$ denote the (current) lower and upper bound (respectively) for the minimum total distance travelled to cover $G_n^2$ with a minimum-link trail so that $l_l \{T_n \} \leq \min(l\{T_n : h\{T_n \} = \min(h\{T_n \})\}) \leq l_u \{T_n \}$.
\end{definition}

Although it could seem noteworthy to take Loyd's puzzle in more than two dimensions by extending $G_3^2$ to $G_3^k$ for any given $k \in \mathbb{N}-\{0,1,2\}$ \cite{Bereg:1}, the classical problem has been constructively solved in recent years by providing an algorithm which returns optimal covering trails (inside the axis-aligned bounding box $[0,3] \times [0,3] \times \dots \times [0,3] \subset \mathbb{R}^k$) with $\frac{3^k-1}{2}$ edges \cite{Ripa:8}, while the planar generalization has remained (partially) unsolved under a few very basic constraints, such as avoiding to visit any node of $G_n^2$ more than once or to come back to the starting point with the last edge. And so it was that, in September 2021, Aldo Tragni shared with us a clever pattern, which returns a covering cycle of link length $2 \cdot (n-1)$ for any even value of $n$ above two. This finally proves Michael Cysouw's first conjecture (as stated in \cite{Pegg:7}) concerning $(2 \cdot n) \times (2 \cdot n)$ grids and also extends Joseph DeVincentis' result \cite{Pegg:7} from $G_n^2 : n 	\equiv 2 \pmod {4}$ to any $G_n^2 : n \equiv 0 \pmod {2}$.

Now, since these cycles can always be converted into open paths by cutting them near one of their $2 \cdot n-2$ turning points, and considering that Keszegh \cite{Keszegh:4} has shown that the same lower bound applies to any minimum-link covering path for $G_n^2$, the main related open problem is to prove the existence of optimal covering paths also for every odd value of $n$ (confirming Kato's conjecture, as reported by Edward Taylor Pegg Jr. in Reference \cite{Pegg:7}).

Thus, Section \ref{sec:2} is devoted to showing how to constructively solve the $n$ odd case and it also proves the existence of covering circuits of link length $2 \cdot n-2$ for any $n$ above three.

In Section \ref{sec:3}, we consider the standard Euclidean metrics and provide tight bounds for the minimum total distance travelled to cover any $G_n^2$ with a minimum-link trail.


\section{Minimal covering paths and cycles} \label{sec:2}

In early September 2021, Rip\`a and Baglioni proved the following lemma (see Appendix for its proof).

\begin{lemma} \label{Lemma 2.1}
$\forall n \in \mathbb{Z}^+$, $\exists P_n : h(P_n ) = \min(h\{T_n \})$ (i.e., $h(P_1 ) = 1$, $h(P_2 ) = 3$, and $n : n \geq 3 \Rightarrow h(P_n ) = 2 \cdot (n-1)$).
\end{lemma}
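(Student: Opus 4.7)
The statement splits naturally by $n$. The cases $n \in \{1, 2\}$ are handled directly: a one-edge segment with a Steiner endpoint meets the conditions for $G_1^2$, and a three-edge ``S''-shape through the four corners of $G_2^2$ does the job for $n = 2$. For $n \geq 3$ the target link length is $2 \cdot (n-1)$ by Remark \ref{rem1.1}, and the subcases even and odd are very different in difficulty.

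For even $n \geq 4$ I would invoke the closed covering cycle $C_n$ of link length $2 \cdot (n-1)$ attributed to Tragni in the introduction. By Definition \ref{def1.4} a covering cycle visits every grid node exactly once except for its coincident start/end vertex, so one may open $C_n$ at that vertex to obtain an open path $P_n$ using the same $2 \cdot (n-1)$ edges. This $P_n$ inherits from $C_n$ the facts that each grid point is visited once, no two consecutive edges are collinear, and no edge is repeated; consequently $h(P_n) = 2 \cdot (n-1) = \min(h\{T_n\})$.

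The substantive content is the odd case $n \geq 3$, which I would attack by induction in steps of two, with explicit verified base constructions for $n = 3$ (a four-edge Loyd-style path whose endpoints are tailored to admit extension) and $n = 5$. For the step $n \to n+2$, I embed the smaller grid as $\{1, \ldots, n\} \times \{1, \ldots, n\} \subset G_{n+2}^2$ and splice four new edges onto one endpoint of $P_n$. Each new edge lies along one side of the outer border of $G_{n+2}^2$, slightly extended with Steiner points so that the extension does not close into a cycle. Under this template the four new edges together cover all $(n+2)^2 - n^2 = 4 \cdot n + 4$ new boundary points and add exactly $4$ to the link length, giving $2 \cdot (n-1) + 4 = 2 \cdot ((n+2) - 1)$ as required.

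The hardest technical point is the splice. One must choose the orientation and translation of $P_n$ so that its distinguished splice-endpoint sits at a corner of the inner grid, that the last edge of $P_n$ is not collinear with the first of the four added edges, that the Steiner extensions outside $G_{n+2}^2$ make each new edge pick up exactly the $n+1$ border points of its own side without stealing any node from $P_n$, and that the four new edges are themselves pairwise non-collinear at consecutive junctions. Because these are open, non-degeneracy conditions on a handful of Steiner offsets, they can be verified for the base cases and are preserved by the inductive step, yielding the stated existence for every odd $n \geq 3$ and completing the proof.
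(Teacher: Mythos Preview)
Your treatment of the small cases and of even $n$ (opening Tragni's cycle) is fine and aligns with how Theorem \ref{Theorem 2.1} subsumes the even half of the lemma. The real gap is the odd-$n$ induction.

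The step $n \to n+2$ by wrapping a four-edge border frame around a translated $P_n$ does not preserve the path property, and the obstacle is not a matter of adjustable Steiner offsets. The internal Steiner vertices of $P_n$ are intersections of consecutive edge-lines, and those lines are fixed by the grid points each edge must pass through; the vertices are therefore \emph{forced}, not free parameters. For $n=3$ this is decisive: the four-link covering trails of $G_3^2$ are unique up to rotation (cf.\ Corollary \ref{Corollary 2.1} and Figure \ref{fig:CP_Figure_6}), and any covering path $P_3$ has its two internal Steiner vertices at integer points one unit outside the grid, e.g.\ at $(-1,2)$ and $(2,-1)$. After translating $P_3$ to $\{1,2,3\}^2 \subset G_5^2$, the vertex $(-1,2)$ lands on the border point $(0,3)$ of $G_5^2$, and your left-column frame edge then visits $(0,3)$ a second time --- the result is a trail, not a path. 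The same mechanism recurs for larger $n$: every edge of a minimum-link $P_n$ that leaves the inner box does so along a line already containing several lattice points, hence of small rational slope, and so it meets lattice points on the ring one unit out; since your four axis-aligned frame edges must cover \emph{every} point of that ring, the collision cannot be absorbed.

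The paper's Appendix proof faces exactly this collision problem and resolves it by a different construction: two triangular half-spirals $B_n$ and $U_n$, each deliberately missing one interior point, joined by a single oblique segment through those two points. The work is then a divisibility argument (cases $n \equiv 1, 2 \pmod 3$) showing that this joining line meets no other lattice point, with the class $n \equiv 0 \pmod 3$ obtained by one extra square-spiral lap from the preceding $n$. So collisions must still be controlled --- but on a single line whose lattice intersections can be analysed, rather than on an entire border ring that the new edges are obliged to cover in full.
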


Thanks to the contribution of Aldo Tragni, we soon realized that this preliminary result could have been easily incorporated into the statement of a theorem, which enhance it (by including optimal covering cycles for every $G_n^2 : n \equiv 0 \pmod{2}$) and also greatly simplify its proof.

\begin{theorem} \label{Theorem 2.1}
Let $n, m \in \mathbb{Z}^+$. $\forall n$, $\exists P_n : h(P_n ) = \min(h\{T_n \})$. In particular, if $n = 2 \cdot m$, then $\exists C_n : h(C_n ) = \min(h\{T_n \})$.
\end{theorem}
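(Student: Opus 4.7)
My plan is to deliver a unified constructive argument whose backbone is Tragni's covering cycle for even grids, reusing it to shortcut the construction of even-$n$ paths and adapting it to handle odd $n$. The structure would be: first dispose of the small cases $n \in \{1,2\}$ directly from Remarks~\ref{rem1.1} and~\ref{rem1.2}; then exhibit the covering cycle for even $n$; derive the even-$n$ path as a free corollary by cutting a single edge of that cycle; finally settle the odd-$n$ path case by surgery on a slightly larger cycle. This is the simplification alluded to in the paragraph preceding the theorem: by getting the cycle first, one avoids building a separate construction for even and odd paths \emph{ab initio}, in contrast to Lemma~\ref{Lemma 2.1}.

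For $n = 2m \geq 4$, I would write down Tragni's cycle by listing its $2n-2$ turning points parametrically in $m$. Pictorially, the chain uses long strokes that each sweep an entire row (or diagonal) of the grid, interleaved with a constant number of ``thinking-outside-the-box'' excursions beyond $[0,n-1]^2$ that reverse direction without spending an additional edge per turn. Correctness is then a local combinatorial check: that consecutive segments are non-collinear, that no edge repeats, that the endpoint coincides with the starting point, and that every $(i,j) \in G_{2m}^2$ lies on at least one segment; together with the lower bound $\min(h\{T_n\}) = 2(n-1)$ recorded in Remark~\ref{rem1.1} this yields $h(C_n) = \min(h\{T_n\})$. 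Cutting the cycle between two consecutive edges at the starting grid node (or at any Steiner turning point) then produces an open polygonal chain with the same $2n-2$ edges that visits each node exactly once, proving the path claim for every even $n$.

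The remaining case, $n$ odd with $n \geq 3$, I expect to be the main technical step. A natural strategy is to embed $G_n^2 \hookrightarrow G_{n+1}^2$, apply Tragni's cycle $C_{n+1}$ to the enclosing even grid (which has $2n$ edges), and perform local surgery near one side: remove two consecutive edges whose only grid-node contributions lie in $G_{n+1}^2 \setminus G_n^2$, and re-orient the resulting open chain so that the two new free endpoints become the endpoints of $P_n$. The delicate bookkeeping is to guarantee that (i) such a pair of consecutive removable edges is actually present inside Tragni's construction, (ii) no consecutive replacement edges become collinear, and (iii) no node of $G_n^2$ is visited more than once after the surgery, so that the result is a path in the sense of Definition~\ref{def1.3} and not merely a trail. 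I expect a case split according to $n \pmod 4$ to be unavoidable here. Once this is verified, the link-length count $2n-2$ matches $\min(h\{T_n\}) = 2(n-1)$ by Remark~\ref{rem1.1} and the proof is complete.
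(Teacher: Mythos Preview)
Your plan for even $n$ matches the paper: Tragni's pattern is exhibited as a covering cycle $C_{2m}$ with $2(n-1)$ edges, and cutting the cycle at a vertex yields the path $P_{2m}$ for free. That part is fine.

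Where you diverge is the odd case, and here there is a genuine gap. The paper does \emph{not} obtain $P_n$ for odd $n$ by operating on $C_{n+1}$. Instead, the very same universal pattern that closes into a cycle when $n$ is even simply fails to close when $n$ is odd: for $n=2m-1>4$ the pattern produces a path with start $(\tfrac{n-1}{2},0)$ and end $(\tfrac{n-3}{2},\tfrac{n+1}{2})$, already of link length $2(n-1)$. The small cases $n\le 4$ are disposed of directly. So the ``simplification'' over Lemma~\ref{Lemma 2.1} advertised before the theorem is that Tragni's pattern is a single construction handling both parities at once, not that one parity is derived from the other by surgery.

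Your surgery idea, by contrast, requires two consecutive edges of $C_{n+1}$ whose grid-node content lies entirely in the L-shaped strip $G_{n+1}^2\setminus G_n^2$; since $C_{n+1}$ is a covering \emph{cycle} (each node visited exactly once), any node of $G_n^2$ on a deleted edge is lost and cannot be recovered elsewhere. This structural hypothesis is not verified in your proposal, and it is in fact false for the explicit $C_4$ the paper records in Figure~\ref{fig:CP_Figure_3}: of its six edges, only the single edge $(\tfrac{3}{2},4)$--$(4,-1)$ avoids $G_3^2$, so no removable consecutive pair exists and your reduction already fails at $n=3$. More generally, Tragni's cycles use long diagonal strokes that cut through the interior of the grid rather than hugging a boundary row and column, so there is no reason to expect two consecutive edges confined to the strip for larger $n$ either. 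To salvage the surgery route you would need a different family of even cycles engineered to carry such a boundary pair, which is at least as much work as the direct construction the paper gives.
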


\begin{proof}
We constructively prove Theorem \ref{Theorem 2.1} by simply observing that, given $G_n^2 : n \in \mathbb{N}-\{0,1,2,3,4\}$, the implicit algorithm which returns the minimal covering paths shown in Figure \ref{fig:CP_Figure_1} also produces covering cycles of link length $2 \cdot (n-1)$ for any even value of $n$ above four. In details, if $n$ is greater than four and is also even, by the universal pattern described through Figure \ref{fig:CP_Figure_2}, it is automatically possible to close all the paths, proving the existence of minimal covering cycles with $\min(h\{T_n \})$ edges, for the given $\{0,1, \dots ,2 \cdot m -1\} \times \{0,1, \dots ,2 \cdot m -1\}$ grid, $m \in \mathbb{Z}^+$.

\begin{figure}[H]
\begin{center}
\includegraphics[width=\linewidth]{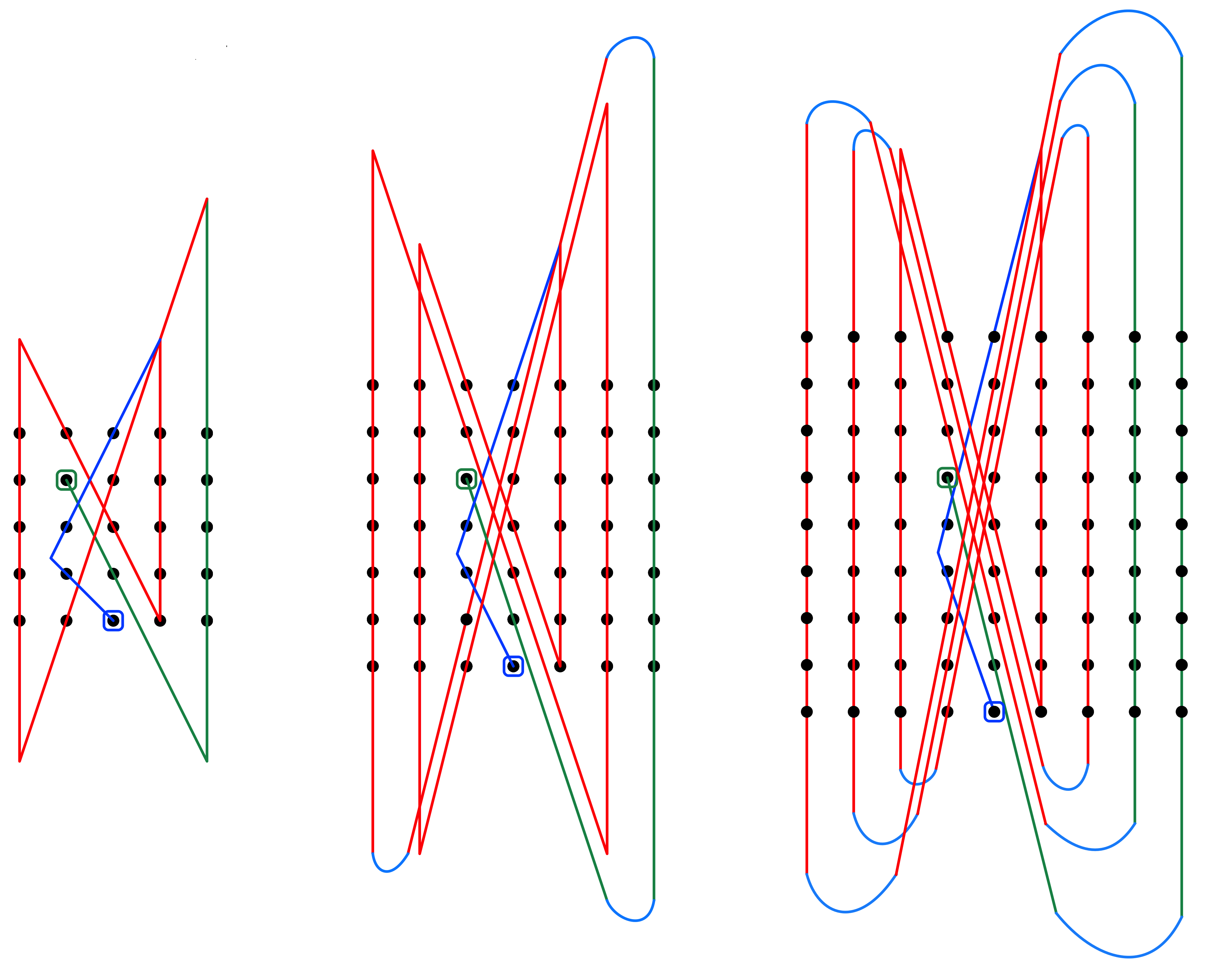}
\end{center}
\caption{Minimum-link covering paths for $G_5^2$, $G_7^2$, and $G_9^2$, constructed from the universal pattern provided. They prove that $\min(h\{T_n \}) = \min(h\{P_n \}) = 2 \cdot (n-1)$ holds for any odd integer $n$ which is above four.}
\label{fig:CP_Figure_1}
\end{figure}

\begin{figure}[H]
\begin{center}
\includegraphics[width=\linewidth]{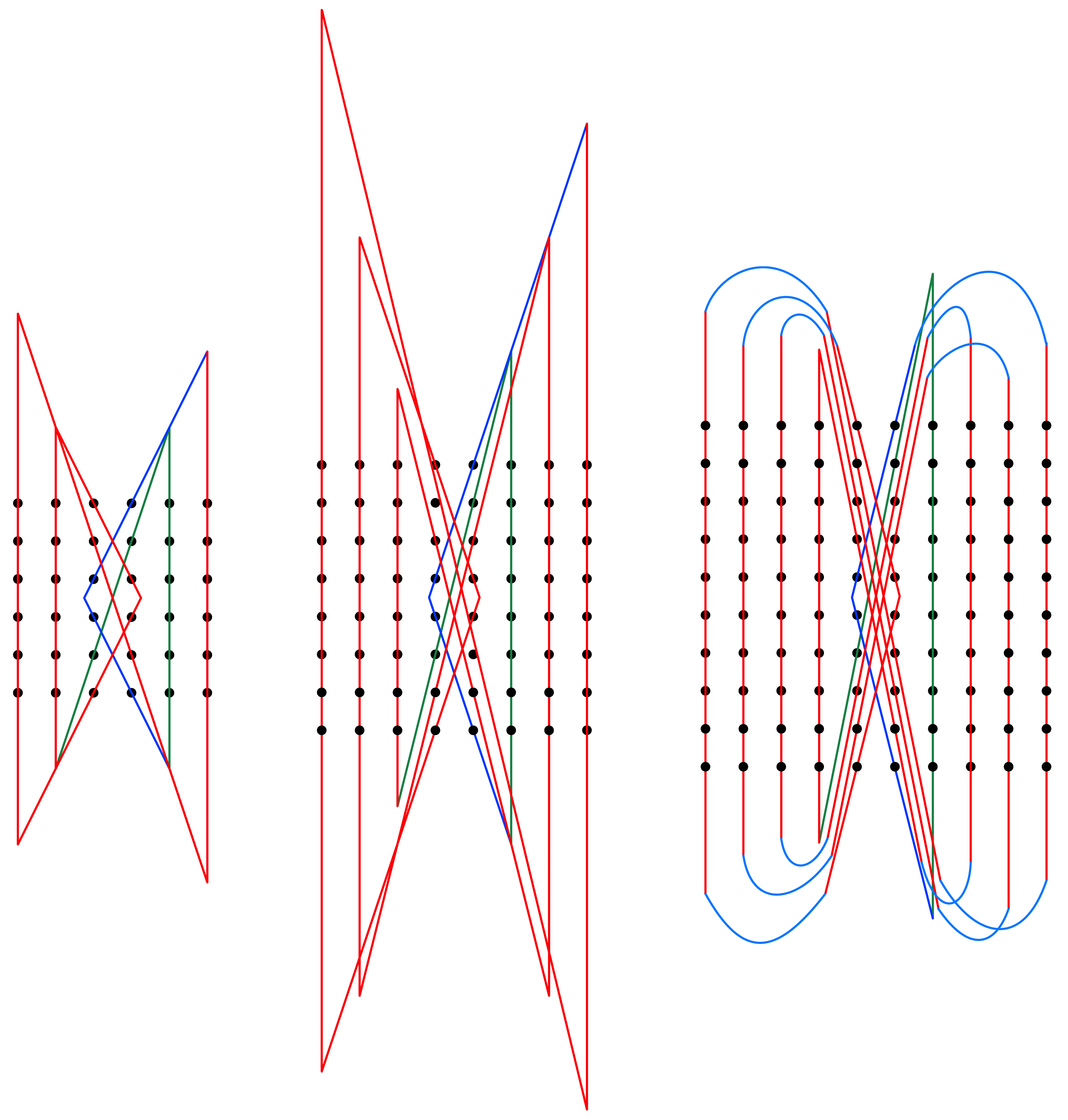}
\end{center}
\caption{Optimal covering cycles for $G_6^2$, $G_8^2$, and $G_{10}^2$, derived from the general algorithm introduced by Figure \ref{fig:CP_Figure_1}. They show that $\min(h\{T_n \}) = \min(h\{C_n \})$ holds for any even number $n$ which is above four.}
\label{fig:CP_Figure_2}
\end{figure}

Thus, if we assume that $m$ is above two, then our universal pattern produces a covering path for $G_{(2 \cdot m-1)}^2$ whose starting point $\left( \frac{n-1}{2},0 \right)$ and endpoint $\left( \frac{n-3}{2},\frac{n+1}{2} \right)$ do not coincide (Figure \ref{fig:CP_Figure_1}), whereas we have a covering cycle for any $G_{(2 \cdot m)}^2$ (see Figure \ref{fig:CP_Figure_2}).

To end the proof, we only need to check the four remaining cases, and they are shown in Figure \ref{fig:CP_Figure_3}.

\begin{figure}[H]
\begin{center}
\includegraphics[width=\linewidth]{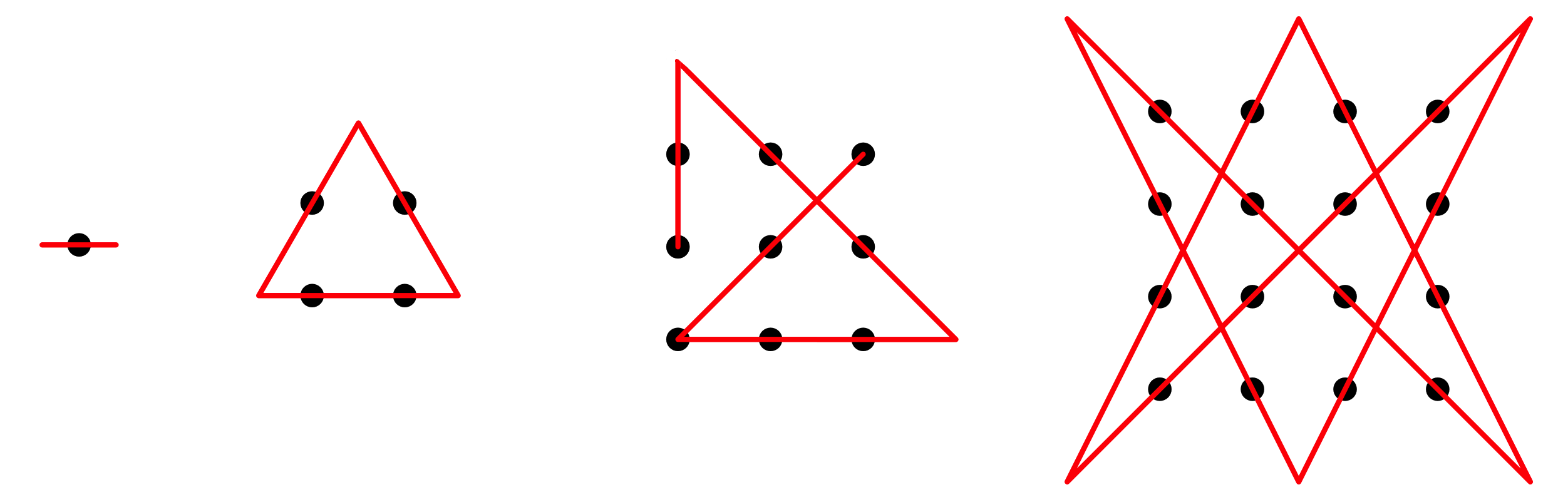}
\end{center}
\caption{Four optimal solutions for the $n \times n$ dots problem, $n=1,2,3,4$ (see Reference \cite{Pegg:7} for the charming covering cycle $C_4 := (-1,-1)$-$\left(\frac{3}{2},4 \right)$-$(4,-1)$-$(-1,4)$-$\left(\frac{3}{2},-1 \right)$-$(4,4)$-$(-1,-1)$).}
\label{fig:CP_Figure_3}
\end{figure}
This concludes the proof of Theorem \ref{Theorem 2.1}.	
\end{proof}

\begin{theorem} \label{Theorem 2.2}
Let $n \in \mathbb{N}-\{0,1,3\}$. $\forall G_{n}^2$, $\exists F_n : h(F_n ) = \min(h\{T_n \})$.
\end{theorem}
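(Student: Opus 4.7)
The plan is to reduce Theorem \ref{Theorem 2.2} to a single genuinely new construction by leveraging Theorem \ref{Theorem 2.1} and Remark \ref{rem1.2}. For every even $n \geq 4$, Theorem \ref{Theorem 2.1} already yields a covering cycle $C_n$ with $\min(h\{T_n\}) = 2 \cdot (n-1)$ edges; since by Definition \ref{def1.4} every element of $\{C_n\}$ is also an element of $\{F_n\}$, the required $F_n$ comes for free. For $n = 2$, Remark \ref{rem1.2} exhibits the closed triangle $(0,0)$-$(0,2)$-$(2,0)$-$(0,0)$, which visits all four nodes of $G_2^2$ along its $3 = \min(h\{T_2\})$ edges and thus serves as $F_2$. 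It therefore remains to construct, for each odd $n \geq 5$, a closed trail of link length $2 \cdot (n-1)$ that covers $G_n^2$.

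For the odd case I would adapt the universal pattern of Figure \ref{fig:CP_Figure_1}. The path $P_n$ it produces has endpoints $\left(\frac{n-1}{2},0\right)$ and $\left(\frac{n-3}{2},\frac{n+1}{2}\right)$, which do not coincide, so a naive closure would introduce an unwanted $(2 \cdot n - 1)$-th edge. The key observation is that a circuit (unlike a cycle) is permitted by Definition \ref{def1.4} to revisit grid nodes, so I would trade the two distinct endpoints for a single shared lattice vertex by re-routing only the first and the last edges of $P_n$: remove them and replace them by two new segments meeting at a carefully chosen grid node, while ensuring that the rows and columns originally swept by the removed edges are still swept by the new ones and that none of the remaining $2 \cdot n - 4$ edges of $P_n$ is disturbed. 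The parity of $n$ guarantees that the central row and the central column meet at a lattice point, which is a natural candidate both for the beginning-and-ending vertex of the circuit and for the revisited node. One then verifies that (i) every node of $G_n^2$ is still covered, (ii) the $2 \cdot (n-1)$ edges remain pairwise distinct, (iii) no two consecutive edges are collinear, and (iv) the final segment terminates at the starting vertex of the first.

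The hard part will be producing one uniform rule (rather than case-by-case modifications) that works for every odd $n \geq 5$: the slopes appearing in the universal construction of $P_n$ are tightly constrained, and ensuring non-collinearity of consecutive edges at the closing vertex is delicate. Should a single closed-form rule prove elusive, the backup plan is to argue inductively by enlarging a valid circuit for some odd $n$ into one for $n+2$, inserting two parallel diagonals that traverse the two additional rows and the two additional columns, after verifying the base case $n = 5$ (and, if needed, $n = 7$) by explicit figures analogous to Figures \ref{fig:CP_Figure_1} and \ref{fig:CP_Figure_2}.
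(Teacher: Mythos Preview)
Your reduction of the even case to Theorem \ref{Theorem 2.1} and your treatment of $n=2$ are fine and in fact slightly cleaner than the paper, which re-proves the even case from scratch. The substantive content of Theorem \ref{Theorem 2.2} is the odd case, and here your proposal is only a plan: you outline two strategies (re-routing the terminal edges of the Figure \ref{fig:CP_Figure_1} path, or an $n\mapsto n+2$ induction) but carry out neither. The re-routing idea in particular is underspecified; you do not identify which grid vertex will serve as the common endpoint, nor do you verify that the two replacement edges can simultaneously recover all nodes lost from the deleted edges while remaining non-collinear with their neighbours. Since the first and last edges of the Figure \ref{fig:CP_Figure_1} path carry specific slopes dictated by the universal pattern, it is not at all obvious that such a re-routing exists uniformly in $n$, and you concede as much.

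The paper sidesteps this difficulty entirely by abandoning $P_n$ as the starting point. Instead it writes down an explicit $8$-edge covering \emph{trail} $T_5$ for $G_5^2$ (one that already revisits the node $(4,0)$, exploiting exactly the freedom you noted circuits enjoy) whose two endpoints are positioned so that the standard square spiral wrapped around it closes into a circuit at the outermost layer; the same is done with a $6$-edge trail $T_4$ for the even side. Thus the paper's argument is your ``backup plan'' executed with carefully chosen base cases, and the key idea you are missing is that the base object should be a purpose-built trail rather than a perturbation of the path from Theorem \ref{Theorem 2.1}.
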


\begin{proof}
Although, from Theorem \ref{Theorem 2.1}, we already know that if $n : (n \equiv 0 \pmod{2} \wedge n > 2)$, then $G_{n}^2$ admits a covering cycle of link length $2 \cdot (n-1)$, here we provide an easy proof, which holds for every $n \in \mathbb{N}-\{0,1,3\}$, arising from the well-known square spiral pattern \cite{Keszegh:4}.

Trivially, if $n = 2$, then the covering cycle $C_2 := (0,0)$-$(0,2)$-$(2,0)$-$(0,0)$ is both a covering circuit and a covering path (by definition).

Thus, let us assume that $G_n : n \geq 4$.

Since in a circuit it is not necessary to pass through each vertex only once (and considering that $n$ is greater than three by hypothesis), if $n$ is even, it is sufficient to take the minimal covering trail $T_4 := (3,3)$-$(0,3)$-$(0,0)$-$(4,0)$-$(1,3)$-$(1,0)$-$(3,2)$ shown on the left side of Figure \ref{fig:CP_Figure_4}, whereas we will take the covering trail $T_5 := (4,4)$-$(4,0)$-$(0,0)$-$(0,8)$-$(4,0)$-$(-1,5)$-$(7,1)$-$(0,1)$-$(3,4)$ displayed on the right side of Figure \ref{fig:CP_Figure_4} for any odd value of $n$ above four; then we will apply the standard square spiral pattern, matching the extension of the blue edge in $(n+1,n)$ or $(n,n+1)$ (depending on whether $n$ is even or odd, respectively), as shown in Figure \ref{fig:CP_Figure_5}. This guarantees that, for each $n$ above three, all the resulting covering circuits are minimal (i.e., given $n \in \mathbb{N}-\{0, 1, 2, 3\}$ and $F_n$ as described above, it follows that $h(F_n) = 2 \cdot (n-1)$).

\begin{figure}[H]
\begin{center}
\includegraphics[width=\linewidth]{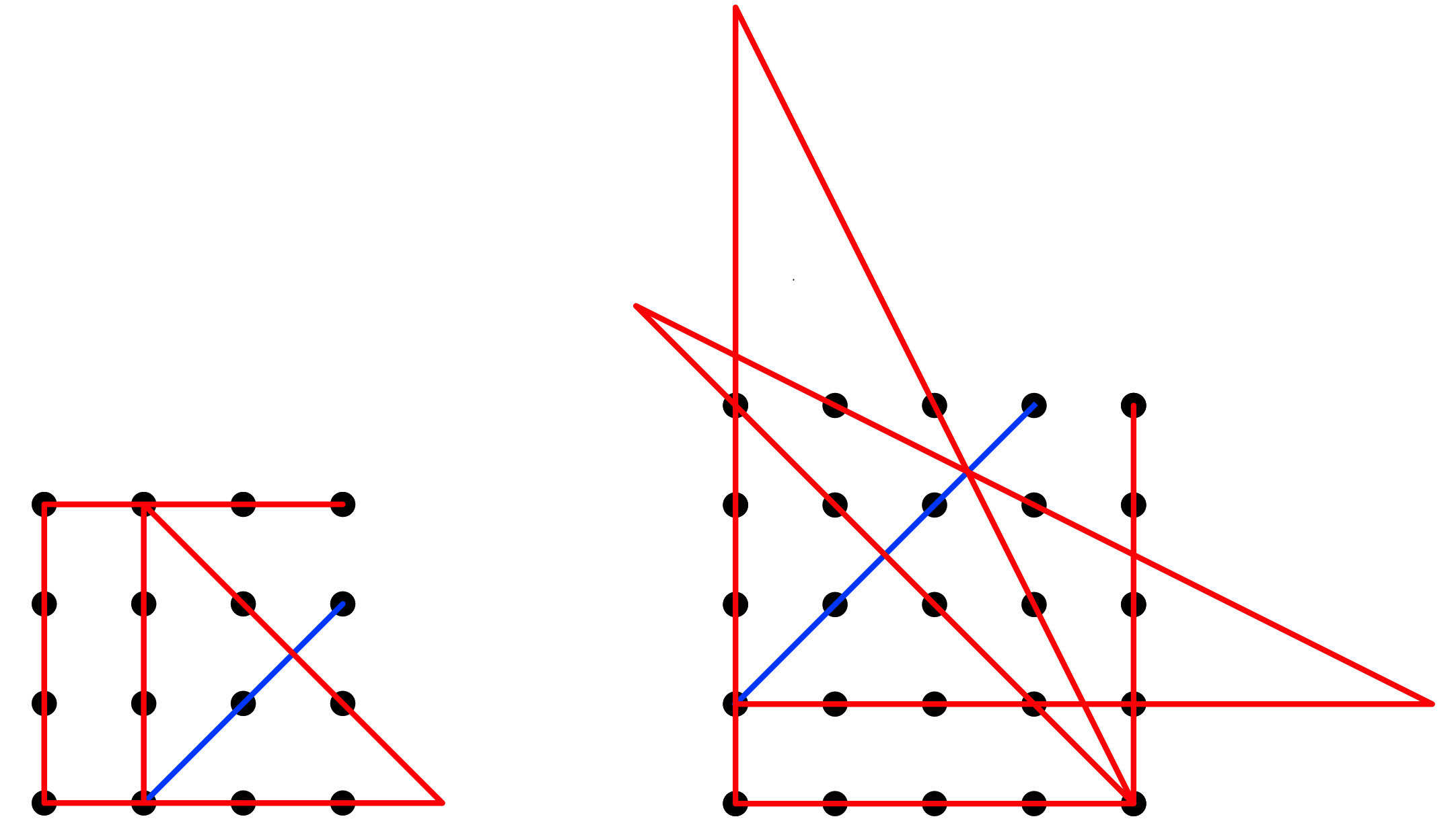}
\end{center}
\caption{The optimal covering trails $T_4$ and $T_5$ (on the left and the right side, respectively) which easily allow us to prove the existence of covering circuits of link length $2 \cdot (n-1)$ for any $G_{n}^2$ such that $n \geq 4$.}
\label{fig:CP_Figure_4}
\end{figure}

\begin{figure}[H]
\begin{center}
\includegraphics[width=\linewidth]{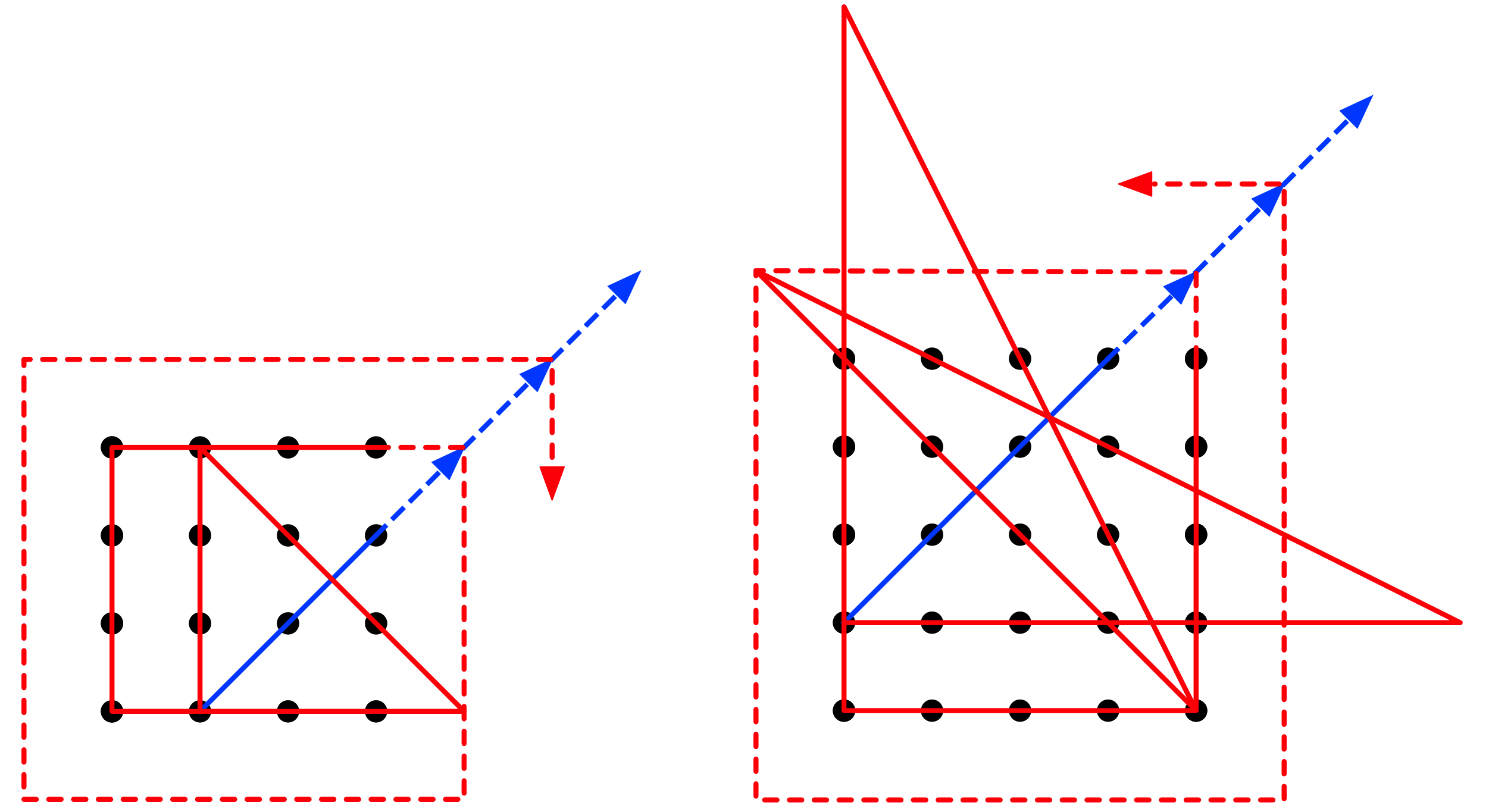}
\end{center}
\caption{A constructive proof of Theorem \ref{Theorem 2.2}, assuming $n > 3$. We take the two covering trails by Figure \ref{fig:CP_Figure_4} and close optimal circuits by extending the blue edge to the top right, while we apply the standard square spiral pattern to the other endpoint (in red).}
\label{fig:CP_Figure_5}
\end{figure}

Therefore, $\min(h\{F_n \}) = \min(h\{T_n \})$ holds for any $n \in \mathbb{N}-\{0,1,3\}$ and the proof of Theorem \ref{Theorem 2.2} is completed.
\end{proof}

\begin{corollary} \label{Corollary 2.1}
If $n \in \{1,3\}$, then $\nexists F_n \in \{F_n \} : h(F_n) = \min(h\{T_n \})$.
\end{corollary}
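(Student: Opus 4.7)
I dispose of the two cases $n=1$ and $n=3$ separately, since the obstructions are of different natures.

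For $n=1$, Lemma \ref{Lemma 2.1} yields $\min(h\{T_1\})\le h(P_1)=1$, while any non-empty closed polygonal chain in $\mathbb{R}^2$ with pairwise distinct edges and no two consecutive collinear edges has at least three edges: a putative two-edge closed chain would force its two edges to share the same pair of endpoints, and hence to coincide. Therefore $\min(h\{F_1\})\ge 3>1\ge\min(h\{T_1\})$.

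For $n=3$, Remark \ref{rem1.1} yields $\min(h\{T_3\})=4$, and I argue by contradiction. Suppose a covering circuit $F_3$ of link length $4$ exists; it is then a (possibly self-intersecting) closed polygonal chain with four corners $v_1,v_2,v_3,v_4$ (indices mod $4$) joined by segments $e_1,\ldots,e_4$ supported on lines $L_1,\ldots,L_4$ with $v_i=L_{i-1}\cap L_i$. Two elementary geometric facts anchor the argument: (a) any straight line meets $G_3^2$ in at most three collinear points, with equality attained only by the eight ``full'' lines (the three rows, the three columns, and the two main diagonals); (b) consecutive support lines must be non-parallel so as to intersect at the corresponding corner.

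The key observation is that any two non-parallel full lines of $G_3^2$ intersect at a point of $G_3^2$. Introducing $g_v$ for the number of corners of $F_3$ that lie in $G_3^2$ and $S:=\sum_{i=1}^{4}|e_i\cap G_3^2|$, fact (a) gives $S\le 12$, while a double count (each grid-point corner belongs to its two adjacent edges and so contributes at least $2$ to $S$, and every covered non-corner grid point contributes at least $1$) gives $S\ge 2g_v+(9-g_v)=9+g_v$. In the subcase where all four $L_i$ are full lines, the key observation forces $g_v=4$, contradicting $S\ge 13>12$. In the subcase where some $L_i$ is not a full line, $S\le 3+3+3+2=11$, which tightens the bound to $g_v\le 2$; a finite case analysis, organised by which (non-empty) subset of the eight full lines appears among $L_1,\ldots,L_4$ up to the dihedral symmetries of $G_3^2$, then shows that the two or three grid points left uncovered by the chosen full lines cannot be simultaneously covered by a single non-full edge segment whose support line also closes the polygon without collapsing two adjacent corners.

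The main obstacle I anticipate is this last, mixed case analysis: verifying in each admissible configuration of full and non-full support lines that either a residual grid point is missed or the quadrilateral degenerates. The all-full-line case is dispatched cleanly by the key observation, and the $n=1$ case is immediate from the definition of a closed polygonal chain.
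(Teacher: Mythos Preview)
Your treatment of $n=1$ is correct and matches the paper's one-line dismissal.

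For $n=3$ your route diverges from the paper's. The paper does not argue from scratch: it invokes Reference~\cite{Keszegh:4} for the (well-known) fact that every $4$-edge covering trail of $G_3^2$ is, up to a rotation, the classic ``outside the box'' pattern, and then simply observes that the half-line carrying the first edge is disjoint from the two half-lines that can carry the last edge, so the trail cannot close up. That is essentially a one-line appeal to an existing classification.

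Your plan is instead a self-contained incidence count followed by a case analysis on which of the eight full lines occur among the four support lines. The ``all four full'' subcase is handled cleanly, and your key observation (that two non-parallel full lines of $G_3^2$ always meet in a grid point) together with the double count $S\ge 9+g_v$ versus $S\le 12$ is correct. The genuine work, as you yourself flag, is the mixed subcase, and here your description understates what remains: the bound $S\le 11$ allows $k=1,2,3$ full support lines, not only $k=3$, so the closing sentence about ``a single non-full edge segment'' covers only one of three branches. Each value of $k$ splits further according to the positions (adjacent/opposite) and types (row, column, diagonal) of the full lines, and in several configurations the obstruction is not that the residual points are non-collinear but that the required non-full lines end up parallel to an adjacent full line and so fail to produce a corner. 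The analysis is finite and does go through, but it is noticeably longer than the paper's citation-based argument; what you gain is independence from \cite{Keszegh:4}, at the cost of a case check you have outlined but not executed.
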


\begin{proof}
For $n = 1$ the proof is trivial (i.e., any non-degenerate line segment has two distinct endpoints), while Reference \cite{Keszegh:4} covers the case $n = 3$, since $h(T_3 ) = 4$ if and only if $T_3$ (possibly turned clockwise by $90^{\circ}$ or $180^{\circ}$ or $270^{\circ}$) is contained in the graph depicted in Figure \ref{fig:CP_Figure_6} so that if the starting point of $T_3$ belongs to the (blue) half-line $r$, then the endpoint must lay on the green half-line $s$ or on the other half-line $t$, and vice versa.

\begin{figure}[H]
\begin{center}
\includegraphics[width=\linewidth]{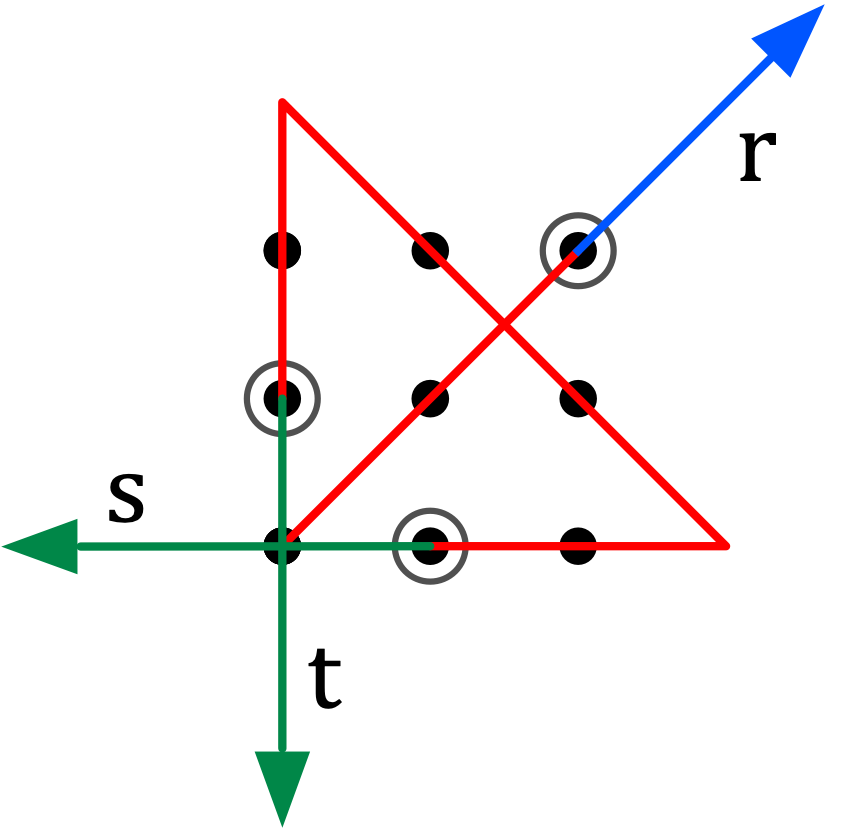}
\end{center}
\caption{The blue line $r$ cannot intersect any of the two green lines. This shows that $\min(h\{F_3 \}) > \min(h\{T_3 \})$ (i.e., $\min(h\{F_3 \}) = \min(h\{T_3 \})+1=5$ since $r$ and $s$ are not collinear).}
\label{fig:CP_Figure_6}
\end{figure}

Thus, $r \cap (s \cup t) = \varnothing$.

Therefore, $n : n \in \{1,3\}$ implies that $\nexists F_n \in \{F_n \} : h(F_n) = \min(h\{T_n \})$.
\end{proof}

\noindent \textbf{Kato's conjecture.} \textit{Let $m \in \mathbb{Z}^+$. $\nexists n : n = 2 \cdot m-1 \wedge \min(h\{C_n \})=\min(h\{T_n \})$.}

\vspace{2mm}

\sloppy This conjecture belongs to the subsection ``Connect the Dots - Lines" of Reference \cite{Pegg:7}. It was first formulated by Toshi Kato (as stated by the author, Edward Taylor Pegg Jr., in the mentioned webpage), and our research has not found any counterexample (e.g., the two self-crossing covering paths for $G_{5}^2$ shown in Figure \ref{fig:CP_Figure_7} turn into a pair of covering circuits only in some non-Euclidean geometries, such as spherical and elliptic geometry, or even by taking into account the non-metrical projective geometry).

Even if we are strongly persuaded that Kato's conjecture is true for any $G_n^2 : n=2 \cdot m-1$, $m \in \mathbb{Z}^+$, a strict proof is still needed.

\begin{figure}[H]
\begin{center}
\includegraphics[width=\linewidth]{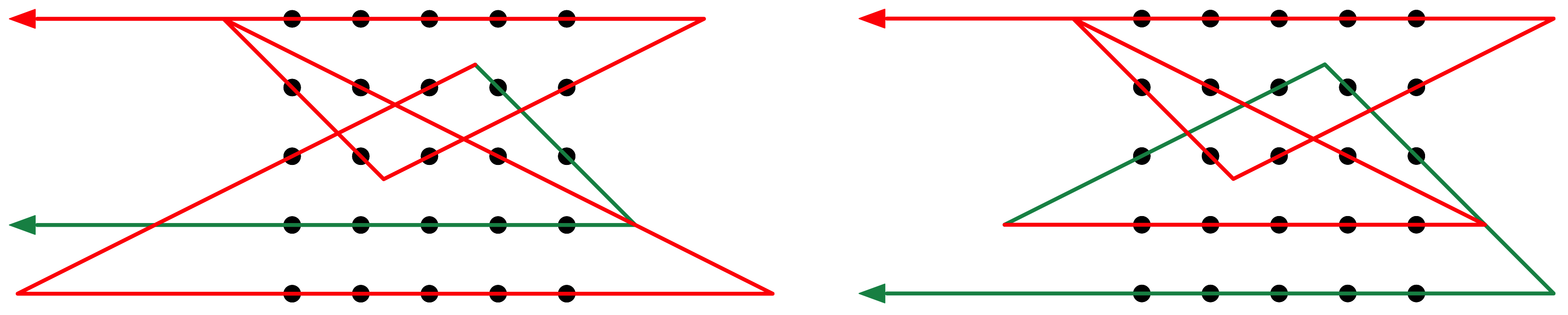}
\end{center}
\caption{A pair of minimum-link covering paths for $G_5^2$ whose first and last edges are parallel each other.}
\label{fig:CP_Figure_7}
\end{figure}

\begin{lemma} \label{Lemma 2.2}
Let $m \in \mathbb{Z}^+$ and let $\tilde{n} := \tilde{n}(m)$ satisfy $\tilde{n}(m) = 2 \cdot \tilde{m}+3$. Then, $\exists F_{\tilde{n}} : h(F_{\tilde{n}}) = \min(h\{ T_{\tilde{n}} \}) = 2 \cdot (\tilde{n}-1)$ and such that the only node of $G_{\tilde{n}}^2$ visited more than once by $F_{\tilde{n}}$ coincides with the beginning and ending vertex of the circuit itself.
\end{lemma}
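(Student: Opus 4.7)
The natural ingredient is the minimum-link covering path $P_{\tilde{n}}$ delivered by Theorem~\ref{Theorem 2.1}, whose link length is $2\cdot(\tilde{n}-1)=\min(h\{T_{\tilde{n}}\})$ and whose starting and ending vertices are the distinct grid nodes $A:=\left(\frac{\tilde{n}-1}{2},0\right)$ and $B:=\left(\frac{\tilde{n}-3}{2},\frac{\tilde{n}+1}{2}\right)$. Since the link count cannot be increased, the plan is to reshape $P_{\tilde{n}}$ into a closed trail of the very same link length by \emph{prolonging} its first edge $e_1$ (leaving $A$) and its last edge $e_{2\cdot(\tilde{n}-1)}$ (arriving at $B$), rather than appending any fresh segment.

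The next step consists in extending $e_1$ backwards past $A$ and $e_{2\cdot(\tilde{n}-1)}$ forwards past $B$. Provided the universal pattern of Figure~\ref{fig:CP_Figure_1} has $e_1$ and $e_{2\cdot(\tilde{n}-1)}$ non-parallel (unlike the degenerate examples in Figure~\ref{fig:CP_Figure_7}), their supporting lines meet at a single point $v$. The candidate $F_{\tilde{n}}$ is then the closed trail that starts at $v$, travels along the new prolongation to $A$, follows $P_{\tilde{n}}$ all the way to $B$, and finally returns along the second prolongation back to $v$; in this way the edge count stays equal to $2\cdot(\tilde{n}-1)$, and the non-collinearity of every pair of consecutive edges is inherited from $P_{\tilde{n}}$ (the only pair that needs fresh checking is the one at $v$, which is guaranteed precisely by the non-parallelism of $e_1$ and $e_{2\cdot(\tilde{n}-1)}$). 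The crucial extra property I would demand from the pattern is that $v$ lie simultaneously in $G_{\tilde{n}}^2$ and in the interior of some middle edge $e_i$ of $P_{\tilde{n}}$ with $1<i<2\cdot(\tilde{n}-1)$. Under those two requirements, $v$ gets visited exactly three times by $F_{\tilde{n}}$ (at the start, once during the inner traversal of $P_{\tilde{n}}$, and at the end), every other grid node of $G_{\tilde{n}}^2$ is still visited exactly once, and in particular $F_{\tilde{n}}$ is \emph{not} a covering cycle, consistently with Kato's conjecture.

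Reading off the coordinates of $v$ from the explicit slopes and intercepts of $e_1$ and $e_{2\cdot(\tilde{n}-1)}$ as rational functions of $\tilde{n}$, and then checking that $v\in G_{\tilde{n}}^2$ together with $v\in$ (interior of some $e_i$), is a straightforward, if somewhat tedious, exercise on the Figure~\ref{fig:CP_Figure_1} pattern. The principal technical obstacle will instead be the third check, namely that the two open segments $(v,A)$ and $(B,v)$ contain no grid node of $G_{\tilde{n}}^2$ at all: otherwise such a node would acquire a second visit and spoil the conclusion of the lemma. I expect to deal with this by an elementary divisibility argument on the supporting lines of $e_1$ and $e_{2\cdot(\tilde{n}-1)}$, anchoring the analysis on the base case $\tilde{n}=5$ and then running it uniformly in $m\in\mathbb{Z}^+$. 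Once all three conditions have been verified, Remark~\ref{rem1.1} automatically forces $h(F_{\tilde{n}})=2\cdot(\tilde{n}-1)=\min(h\{T_{\tilde{n}}\})$ and the lemma follows.
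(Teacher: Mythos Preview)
Your proposal is a plan, not a proof, and the plan rests on hypotheses about the Figure~\ref{fig:CP_Figure_1} pattern that you never verify and that are far from automatic. Concretely: (i) you \emph{assume} the first and last edges $e_1,e_{2(\tilde n-1)}$ are non-parallel; (ii) you \emph{demand} that their intersection $v$ be a node of $G_{\tilde n}^2$; (iii) you \emph{postpone} the check that the open prolongations $(v,A)$ and $(B,v)$ meet no grid node. Item (ii) is the real sticking point: two lines determined by a drawing have no reason to meet at an integer point, and you give no computation of $v$ from the actual Figure~\ref{fig:CP_Figure_1} data to support this. Saying it is ``a straightforward, if somewhat tedious, exercise'' is not the same as doing it, and if (ii) fails for even one odd $\tilde n\ge 5$ your whole scheme collapses for that case. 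Likewise, the ``elementary divisibility argument'' for (iii) is only announced, not carried out.

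By contrast, the paper's proof is a single explicit example: it sets $m=1$, writes down the concrete eight-link circuit
\[
F_5:=(4,0)\text{-}(-1,0)\text{-}\bigl(\tfrac{11}{3},\tfrac{7}{3}\bigr)\text{-}\bigl(\tfrac{1}{2},\tfrac{11}{2}\bigr)\text{-}(4,-5)\text{-}(4,5)\text{-}(0,1)\text{-}(0,4)\text{-}(4,0),
\]
and observes that $(4,0)$ is the only node of $G_5^2$ visited more than once. This $F_5$ is \emph{not} obtained by extending the endpoints of the Figure~\ref{fig:CP_Figure_1} path (whose endpoints for $\tilde n=5$ are $(2,0)$ and $(1,3)$); it is an ad hoc circuit tailored to the claim. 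So even the underlying strategy differs from yours. If you read the lemma universally in $m$, the paper's argument is confined to the base case; if you read it existentially (which the $\tilde m$ versus $m$ wording and the one-line proof strongly suggest), then the single example already suffices and your general machinery is unnecessary.
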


\begin{proof}
Let $m=1$ so that $\tilde{n}=5$. In order to end the proof, it is sufficient to observe that $F_5 := (4,0)$-$(-1,0)$-$\left(\frac{11}{3},\frac{7}{3} \right)$-$\left(\frac{1}{2},\frac{11}{2} \right)$-$(4,-5)$-$(4,5)$-$(0,1)$-$(0,4)$-$(4,0)$ is a minimum-link covering circuit for $\{0,1,2,3,4\} \times \{0,1,2,3,4\} \subset \mathbb{R}^2$ that visits only the vertex $(4,0)$ more than once.	
\end{proof}

\begin{theorem} \label{Theorem 2.3}
Let $m \in \mathbb{Z}^+$ and let $\tilde{n} := \tilde{n}(m)$ satisfy $\tilde{n}(m) = 2 \cdot \tilde{m}+3$. Then, $\exists P_{\tilde{n}} : h(P_{\tilde{n}}) = 2 \cdot (\tilde{n}-1)$ and such that the Euclidean distance between the starting point and the endpoint is infinitesimal.
\end{theorem}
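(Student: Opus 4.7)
The natural approach is to open the covering circuit $F_{\tilde n}$ provided by Lemma \ref{Lemma 2.2} with a small local perturbation at its unique double vertex. Denote this vertex by $V$ (for $\tilde n = 5$, $V = (4,0)$) and note the structural fact visible in the explicit $F_5$: the point $V$ is covered not only at the coincident start/end but also as an interior point of some other edge $e_j$ of $F_{\tilde n}$ (for $\tilde n = 5$, $e_j$ is the vertical segment $(4,-5)$--$(4,5)$, which contains $(4,0)$ in its interior). This redundancy is what lets me erase the ``vertex visit'' to $V$ without losing coverage.

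My plan is to leave every middle edge of $F_{\tilde n}$ unchanged and to modify only the first edge $e_1$ (starting at $V$) and the last edge $e_k$ (ending at $V$). I would replace the starting endpoint $V$ of $e_1$ by a point $A$ lying on the supporting line of $e_1$, shifted a small distance $\epsilon>0$ from $V$ into the interior of $e_1$, and analogously replace the ending endpoint $V$ of $e_k$ by a point $B$ on the supporting line of $e_k$ shifted into its interior. Since $e_1$ and $e_k$ are not collinear at $V$, one can ensure $A\neq B$ for every sufficiently small $\epsilon$; for $\tilde n=5$ the explicit choice $A=(4-\epsilon,0)$, $B=(4-\epsilon,\epsilon)$ works, giving $|AB|=\epsilon$.

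To verify the output is a valid $P_{\tilde n}$ with $h(P_{\tilde n})=2(\tilde n-1)$ I would check, in order: (i) every grid node on $e_1$ or $e_k$ different from $V$ is strictly interior to the original segment, so it is still covered by the shrunk segment as soon as $\epsilon$ is taken smaller than the minimum distance from $V$ to such a node; (ii) $V$ itself remains covered, as an interior point of the untouched edge $e_j$; (iii) $V$ is covered \emph{exactly} once, since the shrunk $e_1$ and $e_k$ no longer reach $V$, and by Lemma \ref{Lemma 2.2} no edge of $F_{\tilde n}$ other than $e_1$, $e_j$, $e_k$ passes through $V$; (iv) no two consecutive edges become collinear and no edge is repeated, because only the free tips of $e_1$ and $e_k$ have been moved while the supporting lines, directions and all the other vertices are untouched. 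The Euclidean distance $|AB|$ is then $O(\epsilon)$, hence can be made arbitrarily small, which is the sense in which it is \emph{infinitesimal}.

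The hard part will be step (iii): one must be sure that no edge of $F_{\tilde n}$ other than $e_1,\,e_j,\,e_k$ secretly passes through $V$, since otherwise the perturbed chain would still visit $V$ twice and fail to be a covering path. For the base case $\tilde n=5$ this is settled by direct inspection of the explicit $F_5$ in Lemma \ref{Lemma 2.2}; for general $\tilde n=2\tilde m+3$ the same surgery applies verbatim once the corresponding ``multiplicity-two at $V$'' property is verified for the family of minimum-link circuits that generalises $F_5$ and underlies Lemma \ref{Lemma 2.2}.
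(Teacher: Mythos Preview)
Your proposal is correct and follows essentially the same approach as the paper: both open the circuit $F_5$ of Lemma~\ref{Lemma 2.2} by retracting the first and last edges a distance $\epsilon$ along their supporting lines away from the double vertex $V=(4,0)$, relying on the fact that $V$ is still covered as an interior point of the vertical edge $(4,-5)$--$(4,5)$. Your checklist (i)--(iv) makes explicit the verifications that the paper leaves to the figure and to the statement of Lemma~\ref{Lemma 2.2}; in particular your observation that $V$ must lie on a \emph{third} edge $e_j$ is exactly the mechanism the paper uses implicitly, and your choice $B=(4-\epsilon,\epsilon)$ is just a reparametrisation of the paper's point on the line $x+y=4$.
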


\begin{proof}
Let us call $\overrightarrow{P_5}$ the covering path for $G_5^2$ shown in Figure \ref{fig:CP_Figure_8}.

\begin{figure}[ht]
\begin{center}
\includegraphics[width=\linewidth]{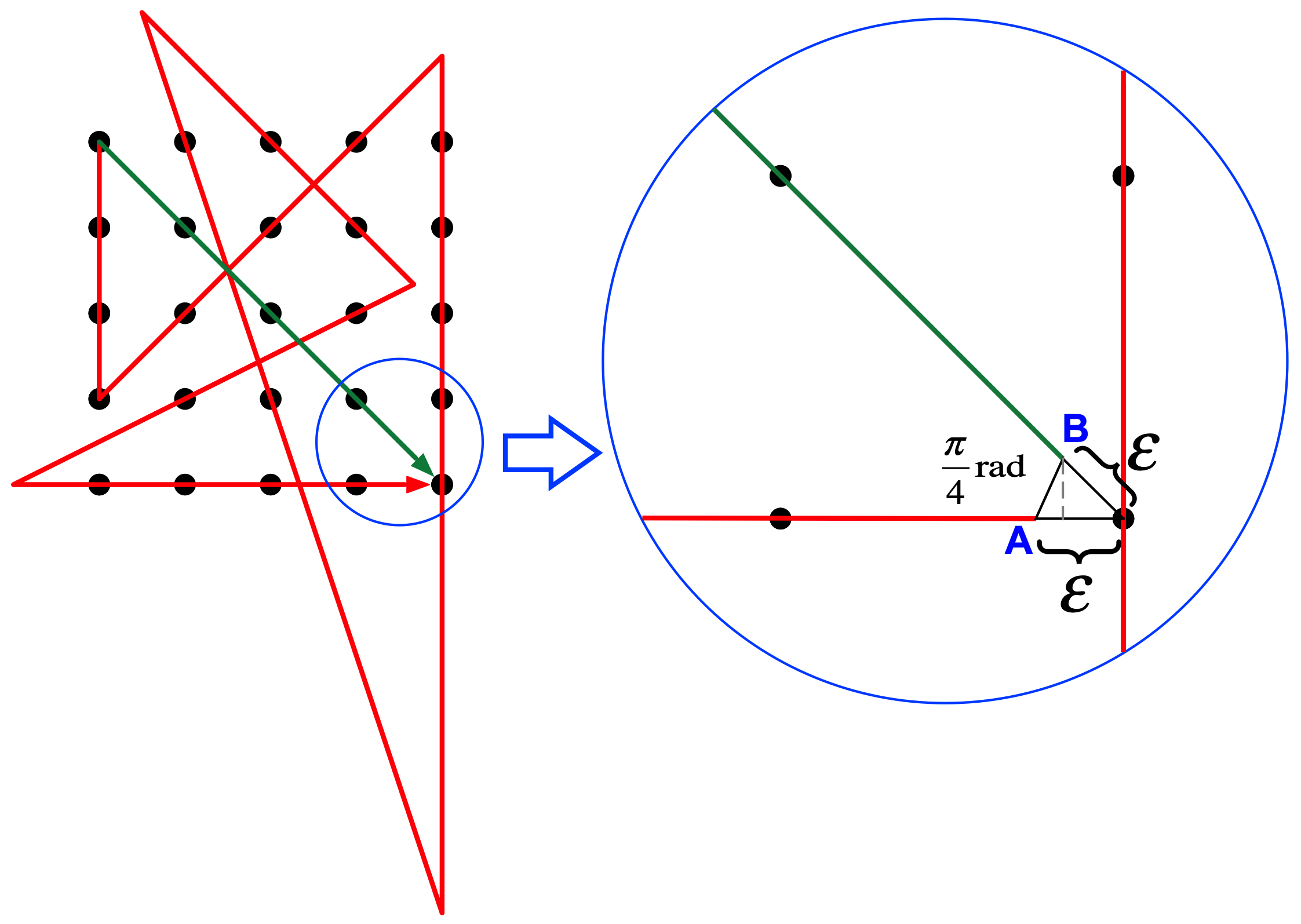}
\end{center}
\caption{The distance between the starting point and the endpoint of the covering path $\protect\overrightarrow{P_5}\!$ can be arbitrarily reduced, and we note that $ h(\protect\overrightarrow{P_5}\!)=\min(h\{T_5 \}) = 8$.}
\label{fig:CP_Figure_8}
\end{figure}

Then, let $\epsilon \in \mathbb{R}^+$. From Figure \ref{fig:CP_Figure_8}, we observe that $\textnormal{A} \equiv (4-\epsilon,0)$, $\textnormal{B} \equiv \left(\frac{1}{\sqrt{2}} \cdot \epsilon,\frac{1}{\sqrt{2}} \cdot \epsilon \right)$, and

\begin{equation} \label{eq:1}
\begin{gathered}
\overline{\rm \textnormal{AB}}=\sqrt{\left( 1-\cos \left(\frac{\pi}{4} \right) \right)^2 \cdot \epsilon^2+\sin^2 \left(\frac{\pi}{4} \right) \cdot \epsilon^2 } = \epsilon \cdot \sqrt{1+\cos^2 \left(\frac{\pi}{4} \right) -2 \cdot \cos\left(\frac{\pi}{4} \right)+\sin^2 \left(\frac{\pi}{4} \right)} \\
\Rightarrow \overline{\rm \textnormal{AB}} = \epsilon \cdot \sqrt{2 \cdot \left(1-\cos \left(\frac{\pi}{4} \right) \right)}. \end{gathered}
\end{equation}

Hence,
\begin{equation} \label{eq:2}
\overline{\rm \textnormal{AB}}=\epsilon \cdot \sqrt{2 \cdot \left(1-\frac{1}{\sqrt{2} }\right)}.
\end{equation}

Now,
\begin{equation} \label{eq:3}
\lim_{\epsilon \to 0} \left( \epsilon \cdot \sqrt{2 \cdot \left(1-\frac{1}{\sqrt{2} }\right)} \right) =\sqrt{2 \cdot \left(1-\frac{1}{\sqrt{2}}\right)} \cdot \lim_{\epsilon \to 0} \epsilon = 0, 
\end{equation}
but $\overrightarrow{P_5}$ is not a cycle, since $\overline{\rm \textnormal{AB}} \nsubseteq \overrightarrow{P_5}$.

On the other hand, if we extend the first and the last edges of $\overrightarrow{P_5}$ we can close the covering circuit $F_5$ mentioned in the proof of Lemma \ref{Lemma 2.2}, and we have already underlined that $F_5$ visits only once each node of $G_5^2$ with the unique exception of the vertex $(4,0)$ itself.

Therefore, by Equations (\ref{eq:2}) and (\ref{eq:3}), the (self-crossing) covering path $\overrightarrow{P_5} := (4- \epsilon,0)$-$(-1,0)$-

\noindent $\left(\frac{11}{3},\frac{7}{3} \right)$-$\left(\frac{1}{2},\frac{11}{2} \right)$-$(4,-5)$-$(4,5)$-$(0,1)$-$(0,4)$-$\left(\frac{1}{\sqrt{2}} \cdot \epsilon,\frac{1}{\sqrt{2}} \cdot \epsilon \right)$ satisfies Theorem \ref{Theorem 2.3}.
\end{proof}

According to Definition \ref{def1.3}, and considering the planar grid $G_n^2$, let us indicate as $\{\{T_n \}-\{P_n \}\}$ the set of all the covering trails that are not paths. Corollary \ref{Corollary 2.2} follows.

\begin{corollary} \label{Corollary 2.2}
Let $n \in \mathbb{Z}^+$ be given and let $\tilde{T_n}$ be a covering trail for $G_n^2$. Then, $\exists \tilde{T_n} \in \{\{T_n \}-\{P_n \}\} : h(\tilde{T_n}) = \min(h\{P_n \})$ if and only if $n \neq 1$.
\end{corollary}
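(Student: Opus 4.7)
The plan is to split the claim into the two directions of the \emph{iff} and, within the nontrivial direction, to separate small $n$ from the generic case.

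For the ``only if'' direction, suppose $n=1$. By Lemma \ref{Lemma 2.1} we have $\min(h\{P_1\}) = 1$, so any candidate $\tilde{T_1}$ must consist of a single non-degenerate line segment. Since $G_1^2 = \{(0,0)\}$ is a single point and a single segment contains $(0,0)$ in at most one location, such a trail visits $(0,0)$ exactly once and therefore is a covering path by Definition \ref{def1.3}. Hence $\tilde{T_1} \notin \{\{T_1\}-\{P_1\}\}$, proving the forward implication.

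For the ``if'' direction, I would exhibit, for each admissible $n$, an explicit covering trail that attains the minimum link length and revisits at least one grid node. For $n=2$, the closed chain $(0,0)$-$(0,2)$-$(2,0)$-$(0,0)$ appearing in Remark \ref{rem1.2} has link length $3 = \min(h\{T_2\}) = \min(h\{P_2\})$ and revisits $(0,0)$, so it belongs to $\{T_2\}-\{P_2\}$. For $n=3$, I would recycle the trail $(0,-1)$-$(0,3)$-$(3,0)$-$(0,0)$-$(2,2)$ already exhibited in Definition \ref{def1.3}: a direct check of its four edges shows that every node of $G_3^2$ is touched while $(0,0)$ is visited twice, and its link length equals $4 = \min(h\{T_3\}) = \min(h\{P_3\})$ by Remark \ref{rem1.1} and Lemma \ref{Lemma 2.1}. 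For $n \in \mathbb{N}-\{0,1,2,3\}$, I would invoke Theorem \ref{Theorem 2.2} to produce a covering circuit $F_n$ with $h(F_n) = \min(h\{T_n\}) = \min(h\{P_n\})$; since $F_n$ is a closed trail whose initial vertex is a node of $G_n^2$, that node is necessarily visited at least twice, so setting $\tilde{T_n} := F_n$ gives an element of $\{\{T_n\}-\{P_n\}\}$ of the required link length.

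The main conceptual obstacle is the $n=3$ case, because Corollary \ref{Corollary 2.1} forbids a minimum-link covering circuit for $G_3^2$, so the uniform ``use Theorem \ref{Theorem 2.2}'' argument fails there. This is why an \emph{ad hoc} example (an open trail that nevertheless repeats a grid vertex) is needed for $n=3$; the one already verified in Definition \ref{def1.3} conveniently does the job, and its existence is the only piece of the argument that is not immediate from prior results in the section.
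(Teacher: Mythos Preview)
Your overall strategy matches the paper's: handle $n=1$ directly, give explicit small-$n$ examples, and appeal to Theorem~\ref{Theorem 2.2} for $n\geq 4$. The $n=1$ and $n=3$ cases are fine (your $n=3$ trail from Definition~\ref{def1.3} works and the paper uses an analogous ad hoc example from Figure~\ref{fig:CP_Figure_9}). However, two steps rest on the same flawed premise and do not go through under the paper's conventions.

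For $n=2$ you take $C_2=(0,0)$-$(0,2)$-$(2,0)$-$(0,0)$ and claim it lies in $\{T_2\}-\{P_2\}$ because $(0,0)$ is revisited. But Definition~\ref{def1.4} defines a covering cycle as a \emph{closed path}, and the paper states explicitly in the proof of Theorem~\ref{Theorem 2.2} that this very $C_2$ ``is both a covering circuit and a covering path (by definition).'' Hence $C_2\in\{P_2\}$, and your example does not witness membership in $\{T_2\}-\{P_2\}$. The paper instead supplies a different minimum-link trail for $G_2^2$ (Figure~\ref{fig:CP_Figure_9}) that revisits $(0,0)$ \emph{internally}, not merely as coinciding endpoints.

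The same issue undermines your $n\geq 4$ argument. You assert that because $F_n$ is closed with initial vertex in $G_n^2$, that vertex is visited twice and therefore $F_n\notin\{P_n\}$. Under the paper's conventions this inference fails: if the start/end coincidence were the \emph{only} repeated grid visit, $F_n$ would be a covering cycle and hence a covering path (and for even $n$ Theorem~\ref{Theorem 2.1} shows such cycles do exist with the right link length). Moreover, the closing vertex in the construction of Theorem~\ref{Theorem 2.2} is $(n+1,n)$ or $(n,n+1)$, a Steiner point, so your premise about the initial vertex is not automatic either. What actually makes the circuits of Figure~\ref{fig:CP_Figure_5} non-paths is that they are built from the trails $T_4$ and $T_5$ of Figure~\ref{fig:CP_Figure_4}, which revisit \emph{interior} grid nodes (e.g.\ $(1,3)$ and $(1,0)$ in $T_4$); the paper's proof points to Figure~\ref{fig:CP_Figure_5} for exactly this reason. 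Replacing your endpoint-revisit argument with this observation, and swapping your $n=2$ witness for one with an internal revisit, would close both gaps.
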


\begin{proof}
Let $n = 1$. We have one point that can be visited using a unique line; trivially, \linebreak $\min(h\{P_1 \})=1$. On the flip side, since $\{P_n \} \cap \{\{T_n \}-\{P_n \}\} = \varnothing $ by hypothesis, it is immediate to conclude that $\min(h\{\{T_n \}-\{P_n \}\}) > 1$ (i.e., a line cannot cross itself). Thus, $n=1 \Rightarrow \min(h\{\{T_n \}-\{P_n \}\}) \neq \min(h\{P_n \})$.

In order to show that $\min(h\{\{T_n \}-\{P_n \}\}) \neq \min(h\{P_n \}) \Rightarrow n=1$, let us invoke Lemma \ref{Lemma 2.1} and consider the proof of Theorem \ref{Theorem 2.2}; it follows that $\forall n \in \mathbb{Z}^+, \exists P_n \in \{P_n \} : h(P_n ) = \min(h\{T_n \})$ (by Lemma \ref{Lemma 2.1}) and also $\min(h\{T_n \}) = \min(h\{\{T_n \}-\{P_n \}\})$ cannot be false if $n$ is above three (as clearly shown in Figure \ref{fig:CP_Figure_5}). Then, the proof of Corollary \ref{Corollary 2.2} only needs to include the pair of minimal covering trails, for $G_2^2$ and $G_3^2$, represented in Figure \ref{fig:CP_Figure_9}, since both such trails visit twice the node $(0,0)$.

\begin{figure}[H]
\begin{center}
\includegraphics[width=\linewidth]{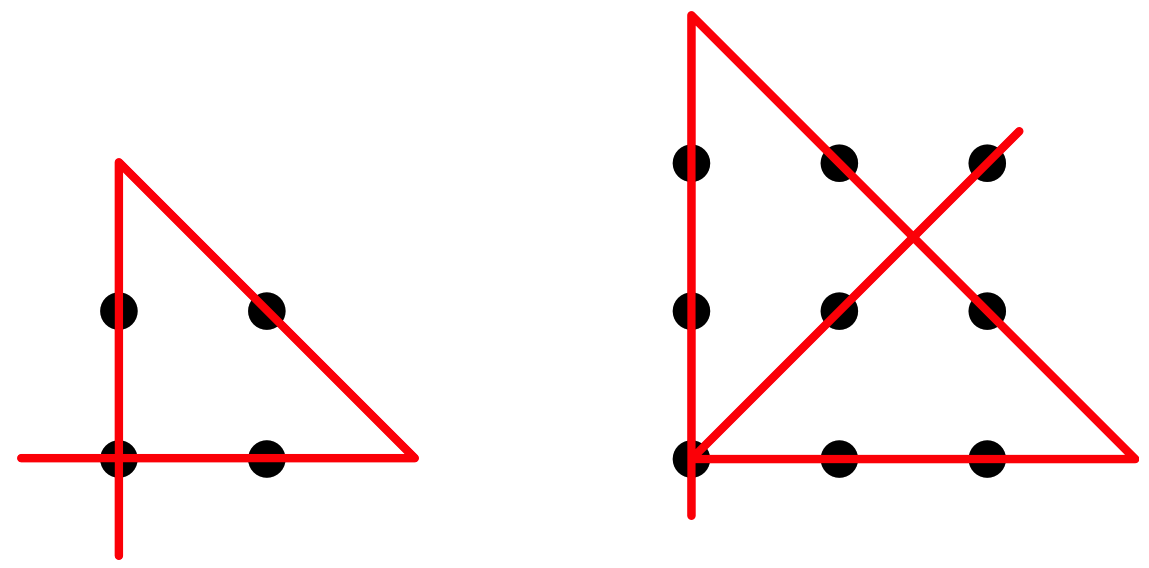}
\end{center}
\caption{Minimum-link covering trails for $G_2^2$ and $G_3^2$; both of them visit twice the node $(0,0)$.}
\label{fig:CP_Figure_9}
\end{figure}

Therefore, $\min(h\{T_n \}) \neq \min(h\{P_n \}) \Leftrightarrow n = 1$, and this concludes the proof of Corollary \ref{Corollary 2.2}.		
\end{proof}


\section{The minimum distance problem} \label{sec:3}

The goal of the present section is to provide a nontrivial upper bound for a variant of the Traveling Salesman Problem (TSP), finding the shortest route to visit every node of $G_n^2$, under the constraint of using only covering trails with $\min(h\{T_n \})$ edges \cite{Wagner:9}.

Thus, for any given $ n \in \mathbb{Z}^+$, we aim to join the $n^2$ vertices of the planar grid $\{0,1, \dots ,n-1\} \times \{0,1, \dots ,n-1\}$ by performing a total of $2 \cdot n-3$ turns, and then we will minimize the total (Euclidean) distance travelled in any $T_n \in \{T_n \} : h(T_n ) = \min(h\{T_n \})$.

In this regard, Figure \ref{fig:CP_Figure_10} shows how to achieve the upper bound given by (\ref{eq:4}); according to the nomenclature introduced by Definition \ref{def1.6}, for any $n \in \mathbb{N}-\{0,1\}$, we have that

\begin{equation} \label{eq:4}
l_u \{T_n \} = \begin{cases}
      3  \hspace{29mm} \mathrm{if}\ \quad n=2 \\
      5 \cdot (1+\sqrt{2}) \hspace{10mm} \mathrm{if}\ \quad n=3 \\
      20+6 \cdot \sqrt{2} \hspace{11mm}  \mathrm{if}\ \quad n=5 \\
      n^2-3+5 \cdot \sqrt{2} \quad \text{otherwise} \\
    \end{cases}\,.
\end{equation}

\begin{figure}[H]
\begin{center}
\includegraphics[width=\linewidth]{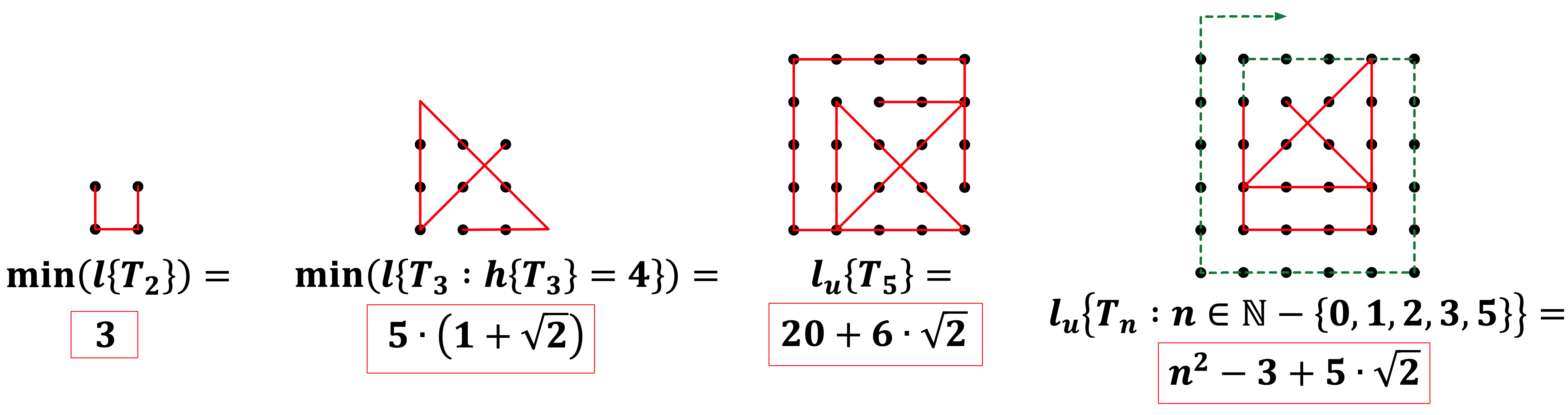}
\end{center}
\caption{Graphical proof that the difference between the upper and the lower bound of the minimum total distance travelled, along any optimal covering trail $T_n : h(T_n ) = \min(h\{T_n \})$, $n \in \mathbb{Z}^+$, cannot be higher than $5 \cdot \sqrt{2}-2$ units.}
\label{fig:CP_Figure_10}
\end{figure}

Since the minimum total distance travelled in any minimal covering trail for $G_n^2$ cannot exceed the upper bound provided by (\ref{eq:4}), and considering that a trivial lower bound is $l_l \{T_n \}=n^2-1$ (see \cite{Keszegh:4}, Fig. 1(a)), we can conclude that if $n$ is above one, then

\begin{equation} \label{eq:5}
n^2-1 \leq \min(l\{T_n : h\{T_n \}=\min(h\{T_n \})\}) \leq n^2-3+5 \cdot \sqrt{2}.
\end{equation}

In details, from Fig. \ref{fig:CP_Figure_10}, we have that $l_u \{T_2 \}=\min(l\{T_2 \})=l((0,1)$-$(0,0)$-$(1,0)$-$(1,1))=3$ units, $l_u \{T_3 \}=\min(l\{T_2 \})=l((2,2)$-$(0,0)$-$(0,3)$-$(3,0)$-$(1,0))=5 \cdot (1+\sqrt{2})$ units, $l_u \{T_5 \}=l((2,3)$-$(4,3)$-$(1,0)$-$(1,3)$-$(4,0)$-$(0,0)$-$(0,4)$-$(4,4)$-$(4,1))=20+6 \cdot \sqrt{2}$ units, and finally \linebreak 
$l_u \{T_{(n=4,6,7,8,\dots)} \}=n^2-3+5 \cdot \sqrt{2}$ units from the standard square spiral pattern applied to the covering trail $T_4=(1,3)$-$(3,1)$-$(0,1)$-$(3,4)$-$(3,0)$-$(0,0)$-$(0,3+c)$, where $c=0$ if and only if $n=4$ and $c=1$ if and only if $n \geq 6$.

\vspace{4mm}

Now, from Reference \cite{Keszegh:4}, Claim 1, we know that the minimum number of edges of any rectilinear covering (or spanning) tree of $G_n^2 : n>2$ (see \cite{Ripa:8}, page 94, Definition 2) is equal to $2 \cdot n-1$ (while we have that $n : n>2 \Rightarrow \min(h\{P_n \})=2 \cdot n-2$ by Lemma 1). Then, if $n$ is above two, it follows that (\ref{eq:5}) can be slightly improved by (\ref{eq:6})

\begin{equation} \label{eq:6}
n^2-1 < \min(l\{T_n : h\{T_n \}=\min(h\{T_n \})\}) \leq n^2-3+5 \cdot \sqrt{2}.
\end{equation}

Thus, for any given $n \in \mathbb{N}-\{0,1,2\}$, the generally proved upper bound of $n^2-3+5 \cdot \sqrt{2}$ units exceeds the minimum total distance travelled by less than $5 \cdot \sqrt{2}-2$ units (e.g., if $n$ is equal to four, then $l_u \{T_4 \}-l_l \{T_4 \}<6 \cdot \sqrt{2}-4$ units).

Now a couple of spontaneous questions arises: \textit{Is $l_u \{T_n \}$ optimal as it is given by (\ref{eq:4}) (so that $l_u \{T_n \}-\min(l\{T_n : h(T_n )=\min(h\{T_n \})\})=0$ for any $n \in \mathbb{N}-\{0,1\}$)?} If not, \textit{how can the maximum gap of $5 \cdot \sqrt{2}-2$ units be significantly reduced for any $n$ above four?}


\section{Conclusion} \label{sec:4}
In this research paper, we have shown that any $G_n^2$ admits a covering path with the same link length of a minimum-link covering trail for the given planar grid. In particular, if $n$ is even, we can always find a covering cycle characterized by a link length as above.

Although Theorem \ref{Theorem 2.2} proves that a covering circuit having the same number of lines of the minimum-link covering trail for $G_n^2$ exists if and only if $n \in \mathbb{N}-\{0,1,3\}$, an important open problem is still waiting for a strict answer: \textit{Is it possible to prove or disprove Kato's conjecture which implies, for any $G_n^2$ such that $n$ is odd, the non-existence of a covering cycle of link length $2 \cdot (n-1)$?}

Lastly, for any given $n$ above three, a second open problem is to \textit{determine which is the minimum total (Euclidean) distance travelled to visit each node of $G_n^2$ with a minimum-link covering trail}.


\section*{Acknowledgments}

We sincerely thank the young mathematician Flavio Niccol\`o Baglioni for helping and supporting us in developing the proof of Lemma \ref{Lemma 2.1}, Aldo Tragni who first discovered the pattern by Figures \ref{fig:CP_Figure_1} and \ref{fig:CP_Figure_2}, and Stefania Lugano who cleverly found the (peculiar) upper bound $l_u (T_5 )=20+6 \cdot \sqrt{2}$ units.


\bibliographystyle{plain}
\bibliography{Shortest_polygonal_chains_covering_each_planar_square_grid_arXiv}


\section*{Appendix}

As mentioned at the beginning of Section \ref{sec:2}, this appendix is devoted to showing the proof of Lemma \ref{Lemma 2.1} which was originally provided in 2021.

\begin{proof} For any given positive integer $n$, we constructively prove the existence of a covering path for $G_n^2$ which is characterized by the same link length of the minimum-link covering trail for the same planar grid (i.e., in the present appendix we will show that $\min(h\{P_n \})=\min(h\{T_n \})=2 \cdot (n-1)$ for any $n \in \mathbb{N}-\{0,1,2\}$, while $\min(h\{P_1 \})=1$ and $\min(h\{P_2 \})=3)$.

Since $\min(h\{P_n \})=\min(h\{T_n \})$ holds for any $n$ below four by Figure \ref{fig:CP_Figure_3}, from here on, let us assume that $n : n \geq 4$.

Now, the general algorithm we use to prove Lemma \ref{Lemma 2.1} is as follows:
\vspace{0mm}
\begin{enumerate}
  \item 	Construct the \textit{bottom triangular spiral} $B_n := (0,0)$-$(n-2,0)$-$(0,n-2)$-$(0,1)$-$(n-4,1)$- \linebreak $(1,n-4)$-$(1,2)$-$\dots$ and continue in this way for a total of $n-2$ edges so that the only missed point in the middle of $B_n$ is $\textnormal{P}_1(n)$ (see Figure \ref{fig:CP_Figure_11}).
  \item 	Since $\textnormal{P}_1(4) \equiv (x(\textnormal{P}_1(4)), y(\textnormal{P}_1(4))) \equiv (0,1)$, it follows that $\textnormal{P}_1 (n) \equiv \Bigl(x(\textnormal{P}_1 (4))+\floor*{\frac{n-2}{3}},$ \linebreak $y(\textnormal{P}_1 (4))+\floor*{\frac{n-3}{3}} \Bigr) \equiv \Bigl(\floor*{\frac{n-2}{3}},\floor*{\frac{n}{3}} \Bigr)$ (where $\floor*{q}$ indicates the floor function which takes as input a real number $q$ and returns the greatest integer less than or equal to $q$).
  \item 	Construct the \textit{top triangular spiral} $U_n := (n-1,n-1)$-$(0,n-1)$-$(n-1,0)$-$(n-1,n-2)$-$(2,n-2)$-$(n-2,2)$-$(n-2,n-3)$-$(4,n-3)$-$\dots$ and so on, until a total of $n-1$ edges is reached. Thus, we get a unique, unvisited, point in the middle of $U_n$ and we call it $\textnormal{P}_2 (n)$ (see Figure \ref{fig:CP_Figure_12}).
    \item Since $\textnormal{P}_2 (4) \equiv (x(\textnormal{P}_2 (4)),y(\textnormal{P}_2 (4))) \equiv (2,2)$, we have that $\textnormal{P}_2 (n) \equiv \Bigl(x(\textnormal{P}_2 (4))-\floor*{\frac{n-4}{3}},$ \linebreak $y(\textnormal{P}_2 (4))-\floor*{\frac{n-2}{3}} \Bigr) \equiv \Bigl(n-2-\floor*{\frac{n-4}{3}},n-2-\floor*{\frac{2-n}{3}} \Bigr)$.
      \item 	Draw the line $r$ that passes through the two points $\textnormal{P}_1 (n)$ and $\textnormal{P}_2 (n)$, then extend the first/last segment of $B_n$ and the last/first segment of $U_n$ accordingly to Figure \ref{fig:CP_Figure_13}, so that we connect $B_n$ to $U_n$ through $r : \overline{\rm \textnormal{P}_1 \mathit{(n)} \textnormal{P}_2 \mathit{(n)}} \in r$ (it is trivial to point out that $0<\frac{y(\textnormal{P}_2 (n))-y(\textnormal{P}(n))}{x(\textnormal{P}_2 (n))-x(\textnormal{P}_1 (n))} < 1)$. Moreover, since we are assuming that $n : n \geq 4$, if $n : n \neq 0 \pmod{3}$, then it is always possible to apply the usual square spiral pattern to $B_n$ or $U_n$ to prove that $\min(h\{P_n \})=\min(h\{T_n \}) \Rightarrow  \min(h\{P_{(n+1)} \})=\min(h\{T_{(n+1)}\})$.
        \item We observe that $\overline{\rm \textnormal{P}_1 \mathit{(n)} \textnormal{P}_2 \mathit{(n)}} \in r \Leftrightarrow r : \frac{x-x(\textnormal{P}_1 (n))}{(x(\textnormal{P}_2 (n))-x(\textnormal{P}_1 (n))}=\frac{y-y(\textnormal{P}_1 (n))}{y(\textnormal{P}_2 (n))-y(\textnormal{P}_1 (n))}$. Hence, \linebreak $r : \frac{x-\floor*{\frac{n-2}{3}}}{n-2-\floor*{\frac{n-4}{3}} -\floor*{\frac{n-2}{3}}}=\frac{y-\floor*{\frac{n}{3}}}{n-2-\floor*{\frac{n-2}{3}} -\floor*{\frac{n}{3}}}$.
\end{enumerate}

\begin{figure}[H]
\begin{center}
\includegraphics[width=\linewidth]{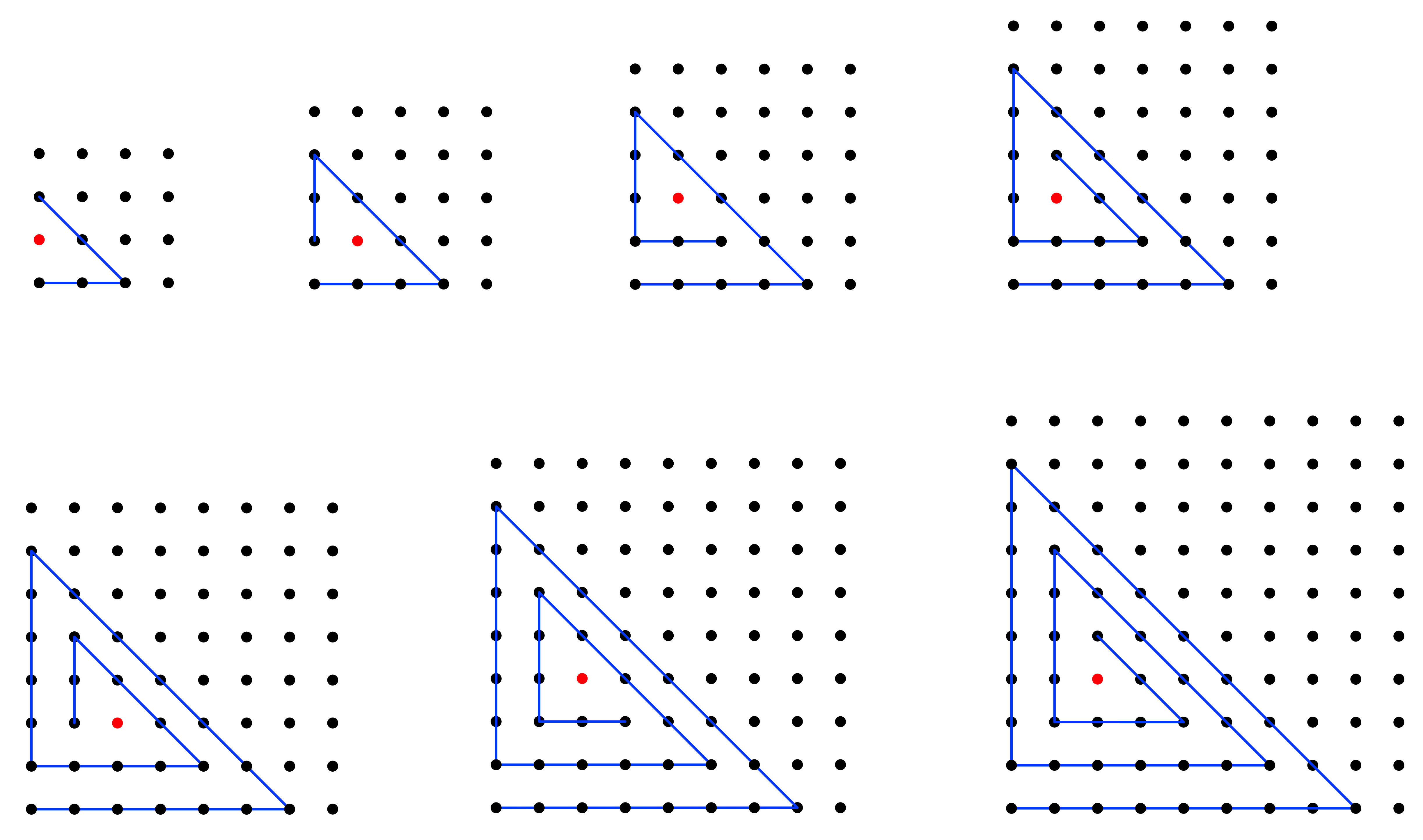}
\end{center}
\caption{The bottom triangular spiral $B_n := (0,0)$-$(n-2,0)$-$(0,n-2)$-$(0,1)$-$(n-4,1)$- \linebreak $(1,n-4)$-$(1,2)$-$\dots$ for $G_{(n=4,5,6,7,8,9,10)}^2$.}
\label{fig:CP_Figure_11}
\end{figure}

\begin{figure}[H]
\begin{center}
\includegraphics[width=\linewidth]{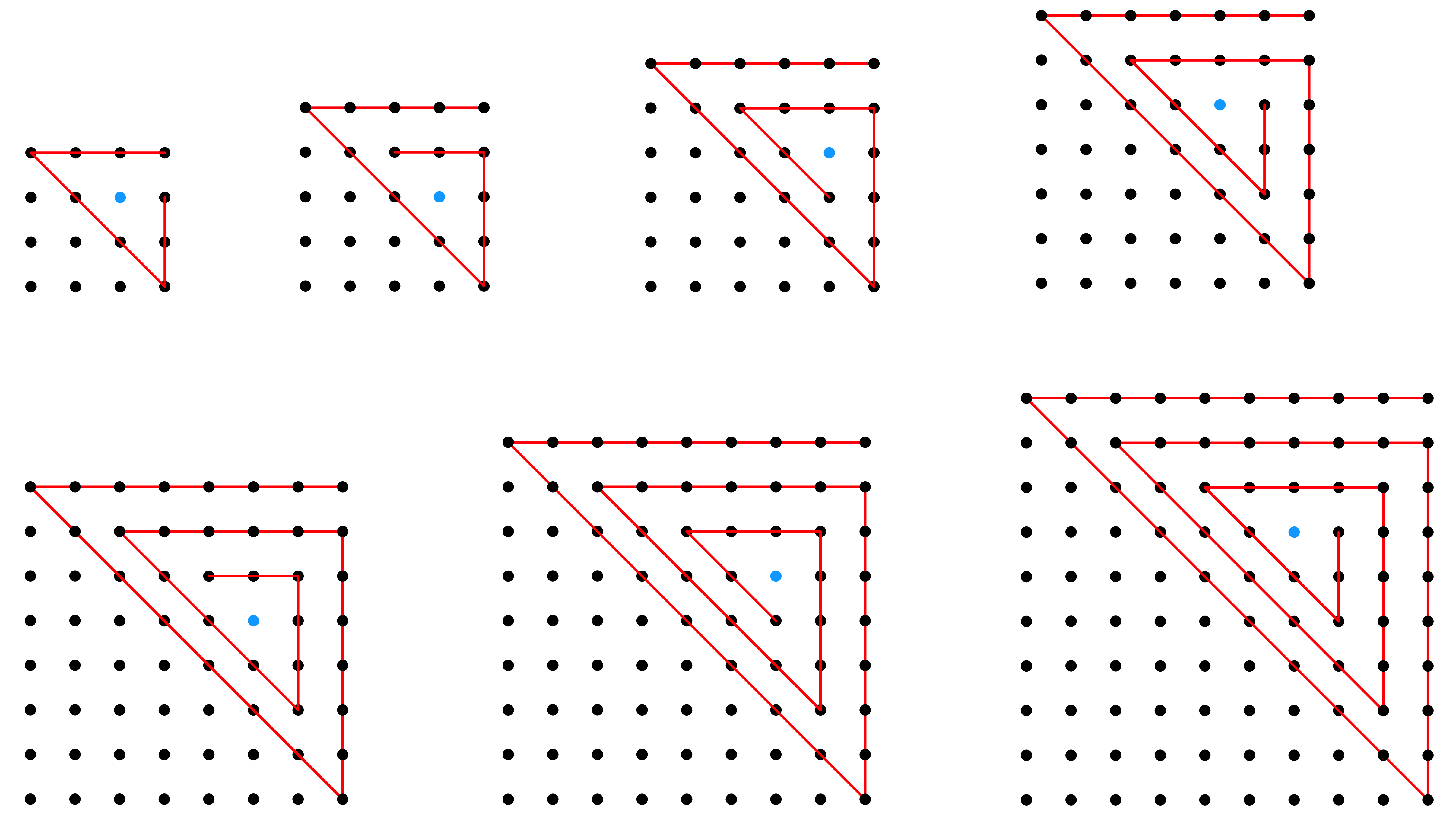}
\end{center}
\caption{The top triangular spiral $U_n := (n-1,n-1)$-$(0,n-1)$-$(n-1,0)$-$(n-1,n-2)$- \linebreak $(2,n-2)$-$(n-2,2)$-$(n-2,n-3)$-$(4,n-3)$-$\dots$ for $G_{(n=4,5,6,7,8,9,10)}^2$.}
\label{fig:CP_Figure_12}
\end{figure}

\begin{figure}[ht]
\begin{center}
\includegraphics[width=\linewidth]{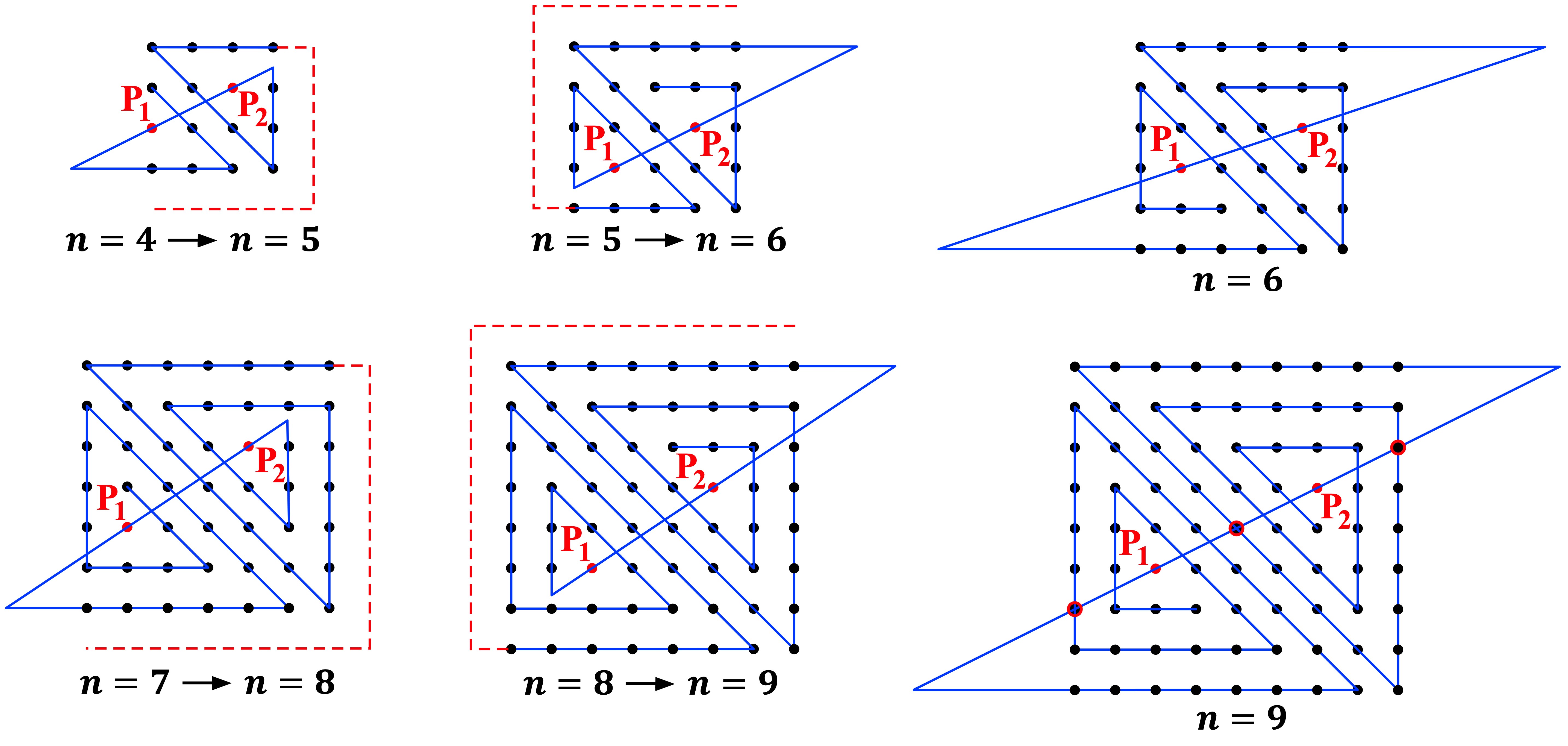}
\end{center}
\caption{The \textit{mixed} spiral pattern which proves that $\min(h\{P_n \})=\min(h\{T_n \}) \Rightarrow$ $\min(h\{P_{(n+1)} \})=\min(h\{T_{(n+1)} \})$ for any $n : n \not{\!\!\equiv} \hspace{1mm} 0 \pmod {3}$. In particular, given $n \in \mathbb{N}-\{0,1,2,3\}$ as usual, there are five distinct collisions between $G_n^2$ and the edge comprising the line segment $\overline{\rm \textnormal{P}_1 \mathit{(n)} \textnormal{P}_2 \mathit{(n)}}$, namely $\textnormal{Q}_{(t=0,1,2,3,4)} := \textnormal{Q}_t (n) \equiv \left(x(\textnormal{P}_1 (n))+\frac{n+3}{6} \cdot (t-1),y(\textnormal{P}_2 (n))+\frac{n-3}{6} \cdot (t-1)\right)$, if and only if $n : (n \equiv 3 \pmod {6}  \wedge n>3)$ (i.e., if we assume that $n : (n \equiv 3 \pmod {6} \wedge n>3)$, then $\textnormal{Q}_1 (n) \equiv \textnormal{P}_1 (n) \wedge \textnormal{Q}_3 (n) \equiv \textnormal{P}_2 (n)$ holds for any $n$ as above).}
\label{fig:CP_Figure_13}
\end{figure}

The mixed square spiral pattern, depicted in Figure \ref{fig:CP_Figure_13}, guarantees that if there are no \linebreak accidental collisions for any $n : n \not{\!\!\equiv} \hspace{1mm} 0 \pmod {3}$, then $\min(h\{P_n \})=\min(h\{T_n \})$ will always hold.

Now let $j \in \mathbb{Z}^+$ be given. We first prove that not a single collision can occur between the line $r : n=3 \cdot j+1$ and any element of the grid which is distinct from $\textnormal{P}_1 \equiv (j-1,j)$ and $\textnormal{P}_2 \equiv (2 \cdot j,2 \cdot j)$ (i.e., we need to show that any node of $G_{(3 \cdot j+1)}^2$ cannot be visited twice by $B_{(3 \cdot j+1)} \cup U_{(3 \cdot j+1)} \cup (r : n = 3 \cdot j+1))$; and then, in order to end this proof, we will verify the same property for any $n : n \equiv 2 \pmod {3}$ by considering the line $r : n=3 \cdot j+2$ (it is not hard to prove that, inter alia, accidental collisions occur if and only if $n : n \equiv 3 \pmod {6}$, see Figure \ref{fig:CP_Figure_13}).

Accordingly, let us begin from the case $n : n=3 \cdot j+1$ and check that the system (\ref{eq:7})

\begin{equation} \label{eq:7}
\begin{cases}
     \frac{x-\floor*{\frac{n-2}{3}}}{n-2-\floor*{\frac{n-4}{3}}-\floor*{\frac{n-2}{3}}}=\frac{y-\floor*{\frac{n}{3}}}{n-2-\floor*{\frac{n-2}{3}}-\floor*{\frac{n}{3}}} \\
      n \equiv 1 \pmod{3} \\
      n>1 \\
    \end{cases}\
\end{equation}

admits no integer solution in the domain $\mathcal{H}(n) := \{0,1,\dots,n-1 \}-\{x(\textnormal{P}_1 (n)),x(\textnormal{P}_2 (n))\}$, for $x(\textnormal{P}_1 (n)) = \floor*{\frac{n-2}{3}}$ and 
$x(\textnormal{P}_2 (n)) = n-2-\floor*{\frac{n-4}{3}}$.

We can rewrite (\ref{eq:7}) as

\begin{equation} \label{eq:8}
\begin{cases}
     \frac{x-j+1}{2 \cdot j-j+1}=\frac{y-j}{j} \\
     j+1 \neq 0 \\
     j \neq 0 \\
    \end{cases}\
\end{equation}

Consequently, (\ref{eq:8}) provides a necessary condition for proving Lemma \ref{Lemma 2.1}; we need to verify that $x_1=j-1$ and $x_2=2 \cdot j$ map the domain $\{0,1, \dots ,3 \cdot j \}$ into the only pair of integer values of the dependent variable, $y := y(x)$, such that $(x,y) \in G_{(3 \cdot j+1)}^2$.

Since $j$ is a positive integer by definition, we observe that the conditions $j \neq -1$ and $j \neq 0$, from (\ref{eq:8}), are always satisfied.

Hence,

\begin{equation} \label{eq:9}
 y=\frac{x \cdot j+2 \cdot j}{j+1},	
\end{equation}

so that $y \in \mathbb{N}_0 \Rightarrow (j+1) | j \cdot (x+2)$.

Here, $\textnormal{gcm}(j+1,j)=1$ and it follows that $y \in \mathbb{N}_0 \Rightarrow (j+1) | (x+2)$.

Thus, a necessary condition for $y$ to be a nonnegative integer is given by $x_k=(j+1) \cdot k-2$, $k \in \mathbb{N}_0$, and a collision between $B_{(3 \cdot j+1)} \cup U_{(3 \cdot j+1)} \cup (r : n=3 \cdot j+1)$ and $G_{(3 \cdot j+1)}^2$ can occur if and only if $x_k \in \{0,1,\dots,n-1\}$, so that it is immediate to verify that $k \in \{1,2\}$ and $x_{1,2}=\{j-1,2 \cdot j\}$ (i.e., if $k$ is above two, then $x_k \geq 3 \cdot j+1 \Rightarrow x_{(k \geq 3)} \geq n$, whereas $k=0$ returns a negative value of $x_k$ and this automatically implies that $\nexists k \neq (1 \lor 2) : x_k \in \{0,1,\dots,n-1 \}$).

From the previous relation, we conclude that $\nexists x \in \mathcal{H}(3 \cdot j+1) : y \in \mathbb{N}_0$ and this proves that our algorithm returns a (self-crossing) covering path of link length $2 \cdot n-2$ for any given $G_{(3 \cdot j+1)}^2 : j \in \mathbb{Z}^+$.

We now repeat the process for $G_n^2 : (n \equiv 2 \pmod {3} \wedge n>3$), by assuming that $n : n=3 \cdot j+2$. In this case, from $\frac{x-\floor*{\frac{n-2}{3}}}{n-2-\floor*{\frac{n-4}{3}}-\floor*{\frac{n-2}{3}}}=\frac{y-\floor*{\frac{n}{3}}}{n-2-\floor*{\frac{n-2}{3}}-\floor*{\frac{n}{3}}}$ (see (\ref{eq:7})), we get

\begin{equation} \label{eq:10}
\begin{cases}
     \frac{x-j}{j+1}=\frac{y-j}{j} \\
     j \neq -1 \hspace{8mm}. \\
     j \neq 0 \\
    \end{cases}\	
\end{equation}

The conditions $j \neq -1$ and $j \neq 0$ from (\ref{eq:10}) are redundant ($j \geq 1$ by definition), so it follows that

\begin{equation} \label{eq:11}
y=\frac{x \cdot j+j}{j+1},
\end{equation}

and then $y \in \mathbb{N}_0 \Rightarrow (j+1) | j \cdot (x+1)$.

Since $\textnormal{gcm}(j+1,j)=1$, $y \in \mathbb{N}_0 \Rightarrow (j+1) | (x+1)$.

Let $k \in \mathbb{N}_0$ , as usual. If we select $k=0$, then we have $x_{(k=0)}=-1 \Rightarrow \nexists k=0 : x_k \in \{0,1,\dots,n-1\} \supset \mathcal{H}(n)$, whereas if we take $k \geq 3$, then $x_{(k \geq 3)} \geq (3 \cdot j+1)-1 \Rightarrow x_{(k \geq 3)} \geq n \Rightarrow \nexists k \geq 3 : x_k \in \{0,1,\dots,n-1\} \supset \mathcal{H}(n)$. And lastly, $k \in \{1,2\}$ implies that $x_k \notin \mathcal{H}(3 \cdot j+2)$ by construction (i.e., $x_1=j=\frac{3 \cdot j+2-2}{3}=\frac{n-2}{3}$ and $x_2=2 \cdot j+1=\frac{6 \cdot j+4-1}{3}=\frac{2 \cdot n-1}{3}$ obviously match the first coordinates of $\textnormal{P}_1 (n) \equiv (x(\textnormal{P}_1 (4))+\floor*{\frac{n-2}{3}},y(\textnormal{P}_1 (4))-1+\floor*{\frac{n}{3}})$ and $\textnormal{P}_2 (n) \equiv (x(\textnormal{P}_2 (4))-\floor*{\frac{n-4}{3}},y(\textnormal{P}_2 (4))-\floor*{\frac{n-2}{3}})$, respectively).

Since we have shown that $n : (n \equiv 2 \pmod {3} \wedge n>3)$ is a sufficient condition for applying the mixed square spiral pattern to the path returned by the fifth step of our general algorithm, getting a covering path with $2 \cdot (n+1)-2$ edges also for any $G_{(n+1)}^2 : (n+1) \equiv 0 \pmod {3}$, bearing in mind the previous result (which proves the existence of minimal covering paths for any $n : (n \equiv 1 \pmod {3} \wedge n>3)$) and considering that Figure \ref{fig:CP_Figure_3} provides (well-known) covering paths for each $n$ below five, we can finally conclude that, $\forall n \in \mathbb{Z}^+$, $\nexists x \in \mathcal{H}(n) : y \in \mathbb{N}_0$.

Therefore, for any given $n \in \mathbb{Z}^+$, there exists a covering path for $G_n^2$ which is characterized by a link length of $\min(h\{T_n \})$, and this concludes the proof of Lemma \ref{Lemma 2.1}.
\end{proof}

\end{document}